\newtheorem{proposition}{Proposition}[section]
\newtheorem{lemma}[proposition]{Lemma}
\newtheorem{corollary}[proposition]{Corollary}
\theoremstyle{definition}
\newtheorem{example}[proposition]{Example}
\theoremstyle{remark}
\newtheorem{remark}[proposition]{Remark}
\numberwithin{equation}{section}
 \DeclareMathOperator{\RE}{Re}
 \newcommand{\M}{\mathcal{M}}
 \newcommand{\K}{\mathcal{K}}
 \newcommand{\Real}{\mathbb{R}}
 \newcommand{\Complex}{\mathbb{C}}
 \newcommand{\abs}[1]{\left\vert#1\right\vert}
 \newcommand{\set}[1]{\left\{#1\right\}}
\author{ Aik. Aretaki and J. Maroulas\footnote{Department of Mathematics, National\, Technical \,University \,of
Athens, Zografou Campus, Athens 15780, Greece. E-mail address:
maroulas@math.ntua.gr.}}
\title{The higher rank numerical range \\of nonnegative matrices}
\begin{document}
\maketitle


\begin{abstract}
In this article the well known  ''Perron-Frobenius theory'' is investigated involving the higher rank numerical range $\Lambda_{k}(A)$ of an irreducible and entrywise nonnegative matrix $A$  and extending the notion of  elements of maximum modulus in $\Lambda_{k}(A)$. Further, an application of this theory to the $\Lambda_{k}(L(\lambda))$ of a Perron polynomial $L(\lambda)$ is elaborated via its companion matrix $C_{L}$.
\end{abstract}
{\small\textbf{Key words}:Perron-Frobenius theory, nonnegative matrix, Perron polynomial, higher rank numerical range, rank-k numerical radius.\\
\textit{AMS Subject Classifications:} 15B48, 15A60, 47A12.}

\section{Introduction}

Let $\M_{n}(\Complex)$ be the algebra of matrices $A=[a_{ij}]_{i,j=1}^{n}$
with entries $a_{ij}\in \Complex$ and $k\geq 1$ be a positive integer.
The \emph{k-rank numerical range} $\Lambda_{k}(A)$ of a matrix $A\in\M_{n}$ is defined  by
\begin{equation}\label{def1}
\Lambda_{k}(A)=\set{\lambda\in\Complex : PAP=\lambda P\,\, for\,\, some\,\, P\in\mathcal{P}_{k}},
\end{equation}
where $\mathcal{P}_{k}$ is the set of all  orthogonal projections $P$ of $\Complex^{n}$ onto any $k$-dimensional
subspace $\K$ of $\Complex^{n}$. Equivalently,
\begin{equation}\label{def2}
 \Lambda_{k}(A) = \set{\lambda\in\Complex : X^{*}AX=\lambda I_{k},\,\,X\in \M_{n,k}(\Complex),\,\,X^{*}X=I_{k}}.
\end{equation}
For any $k$, the sets $\Lambda_{k}(A)$ are generally called \emph{ higher rank numerical range}.
The concept of higher rank numerical range has been introduced by Choi \textit{et al} in \cite{Choi,C-H-K-Z,C-K-Z-q,C-K-Z}
and studied thoroughly by other researchers in \cite{Poon-Li-Sze,Li-Sze,Hugo}. Apparently, for $k=1$, $\Lambda_{k}(A)$ yields
the classical  \emph{numerical range} of a matrix $A$ \cite{Rao,H.J.T}, i.e.
\begin{equation}\label{rel4}
\Lambda_{1}(A)\equiv F(A)=\set{ x^{*}Ax : x\in \Complex^{n}, \,x^{*}x=1}
\end{equation}
and it is readily verified
\[
F(A)\supseteq\Lambda_{2}(A)\supseteq\ldots\supseteq\Lambda_{k}(A).
\]
Moreover, the notion of the \textit{numerical radius} for the numerical range,
\[
r(A)=\max{\set{\abs z: z\in F(A)}},
\]
has been extended to the \textit{rank k-numerical radius}
\[
r_{k}(A)=\max{\set{\abs z: z\in \Lambda_{k}(A)}},
\]
considering $r_{k}(A)=-\infty$, whenever $\Lambda_{k}(A)$ is an empty set \cite{Poon-Li-Sze}. Note that $r(A)\geq r_{k}(A)$ and $r(A)\geq\rho(A)$, where $\rho(\cdot)$ denotes the spectral radius of a matrix, i.e. $\rho(\cdot)=\max\set{\abs\lambda: \lambda\in\sigma(\cdot)}$, with $\sigma(\cdot)$ to be the spectrum of a matrix.

We mention that an $n\times n$ matrix $A$ is said to be \textit{nonnegative} when each $a_{ij}\geq0$, and this is denoted by writing $A\geq0$. Similarly, $A$ is said to be \textit{positive} whenever each $a_{ij}>0$, denoted by $A>0$. The matrix $A\in\M_{n}$ is called  \emph{reducible}  when there is a permutation matrix $P$ such that
$$P^{T}AP=\begin{bmatrix}
            R & S \\
            0 & T \\
          \end{bmatrix},
$$
where $R, T$ are both square. For $n=1$, should be $A=0$. Otherwise, $A$ is said to be \emph{irreducible}. If $A$ is nonnegative and irreducible having $q>1$ eigenvalues of maximum modulus, then it is called \textit{imprimitive} and $q$ is referred to as \textit{index of imprimitivity}. In case $q=1$, $A$ is characterized as \textit{primitive}.

It is  well known that Perron-Frobenius theory concerns the spe\-ctral properties of positive and nonnegative matrices, namely the existence of positive or nonnegative eigenvalues and eigenvectors \cite{H.J.}. In addition, Issos'  treatment contributes extensions of the Perron-Frobenius theorem to the numerical range of a nonnegative and irreducible matrix $A$, relating the $\rho(A)$ with the  $r(A)$ \cite{Issos}. These results give the motivation for further investigation of $\Lambda_{k}(A)$ in the case of a nonnegative and irreducible matrix $A$ (section 2).

In the next section, we present applications of the Perron-Frobenius theory derived for a matrix polynomial,  considering the higher rank numerical range of matrix polynomials.

\section{Nonnegative and irreducible matrices}

According to  Issos' main theorems \cite[Th.4,7]{Issos}, if the numerical range $F(A)$ of a nonnegative and irreducible matrix $A$ has $q$ maximal elements, then they are equally spaced around a circle centered at the origin through a constant angle, with one of them lying  on the positive real axis. In particular,
\[
\mathcal{F}(A)=\set{r(A)e^{\textbf{i}\frac{2\pi t}{q}}: t=0, \ldots, q-1}
\]
is exactly the set of all the maximal elements in $F(A)$ and to the numerical radius $r(A)\in F(A)$ there always corresponds a positive unit ve\-ctor. Furthermore, Issos also proved that the cardinality of the set $\mathcal{F}(A)$ coincides with the index of imprimitivity of $A$.

In this section, we investigate to what extend do Issos' results apply to  $\Lambda_{k}(A)$ of a nonnegative and irreducible matrix $A$, taking into consideration that it is a non empty set \cite{Poon-Li-Sze}. At this point, we define the \emph{maximal elements} in $\Lambda_{k}(A)$ to constitute the set:
\[
\mathcal{F}_{k}(A)=\set{z\in\Lambda_{k}(A) : \abs z=r_{k}(A)},
\]
which for $k=1$ is equal to $\mathcal{F}(A)$.
Although the numerical range $F(A)$ of a nonnegative matrix $A$ always contains the numerical radius $r(A)>0$ \cite[Th.1]{Issos}, it is not generally true that $r_{k}(A)\in\Lambda_{k}(A)$, as we may observe in the following example.\\
\begin{example}
Let the $8\times 8$ nonnegative irreducible  matrix
     $A=\left[\begin{smallmatrix}
     0  & 0   &  3  &   0  &   0  &   1  &   0  &   0\\
     5  & 0   &  0  &   0  &   0  &   0  &   1  &   0\\
     0  & 0   &  0  &   1  &   9  &   0  &   0  &   6\\
     0  & 1   &  0  &   0  &   0  &   0  &   0  &   0\\
     0  & 1   &  0  &   0  &   0  &   0  &   0  &   0\\
     0  & 0   &  0  &   4  &   1  &   0  &   0  &   2\\
     0  & 0   &  1  &   0  &   0  &   3  &   0  &   0\\
     0  & 5   &  0  &   0  &   0  &   0  &   0  &   0\\ \end{smallmatrix}\right]$.
The outer curve in the following figure illustrates the boundary of $F(A)$, whereas the second and third inner curves illustrate the boundary of $\Lambda_{2}(A)$ and $\Lambda_{3}(A)$, respectively.
Apparently, $0<r(A)\in F(A)$, but in the figure, we easily recognize that $0<r_{2}(A)\notin\Lambda_{2}(A)$ and $0<r_{3}(A)\notin\Lambda_{3}(A)$.
Note that $A$ has 4 maximal eigenvalues, which are marked by ''+'' and $F(A)$ has 4 maximal elements, as well.

\begin{center}\includegraphics[width=0.35\textwidth]{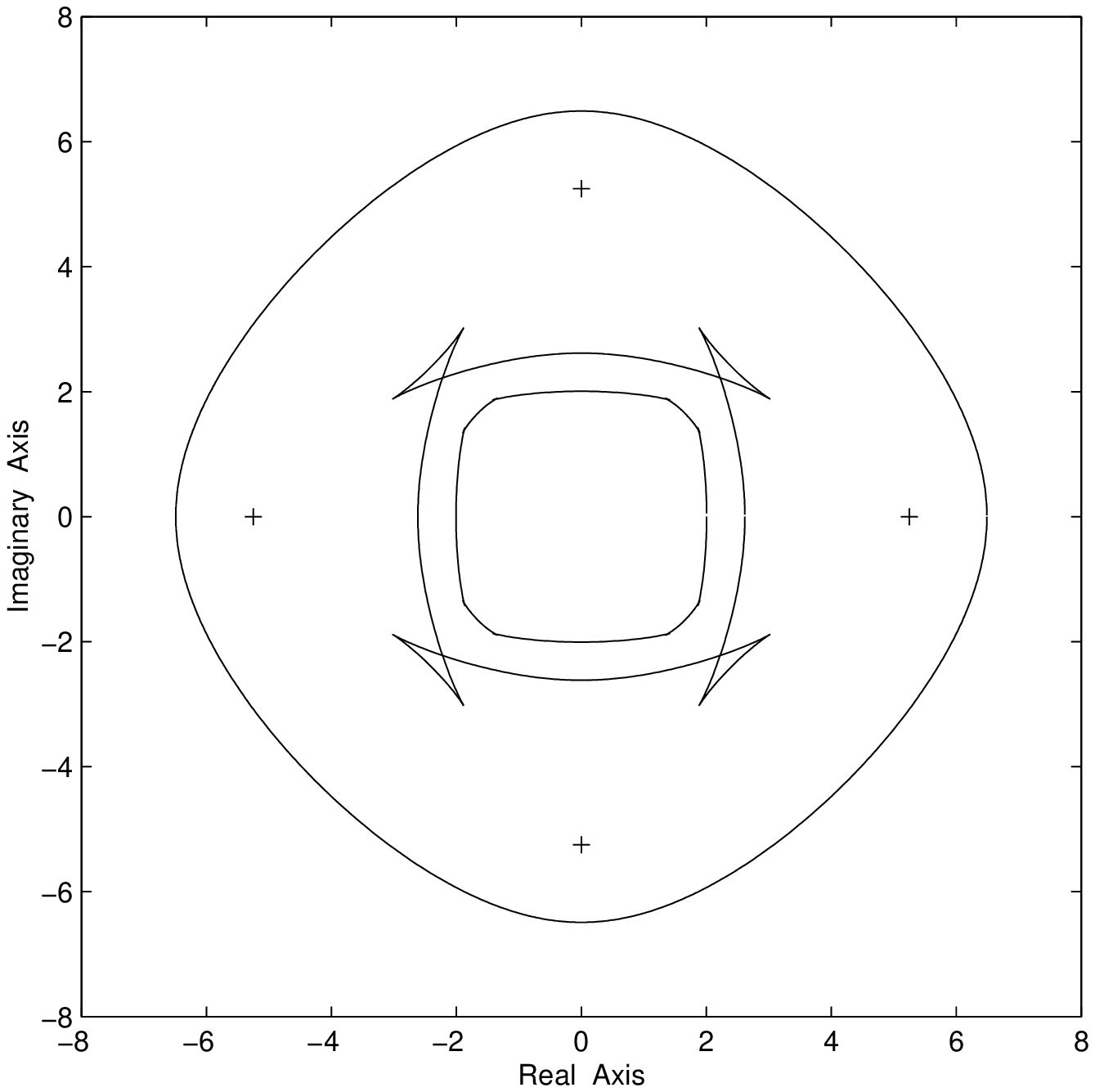}\end{center}                                      
\end{example}

The following Lemma generalizes a familiar condition for  rotational invariance and  symmetry about the origin to the case of the higher rank numerical range.

\begin{lemma}\label{lem1}
Let $A\in\M_{n}(\Complex)$ be permutation similar (hence unitarily similar) to the matrix
\begin{equation}\label{eq1}
C=\begin{bmatrix}
  0 & C_{12} & 0 & \cdot & 0 \\
  0 & 0 & C_{23}  & \cdot & 0 \\
  \cdot & \cdot & \cdot & \cdot & \cdot \\
  0 & \cdot & \cdot & 0 & C_{q-1,q} \\
  C_{q1} & 0 & \cdot & \cdot & 0 \\
\end{bmatrix}
\end{equation}
with the zero  blocks along the main diagonal be square.
For $j=1, \ldots, k$ such that $r_{j}(A)\neq\set{-\infty,0}$, we have:
\begin{description}
\item[ \,\,\textbf{I.}] $\Lambda_{j}(A)=\Lambda_{j}(e^{\textbf{i}\frac{2\pi t}{q}}A)$, for $t=0,1,\ldots, q-1$

\item[ \textbf{II.}] $\mu\in\Lambda_{j}(A)$ if and only if $\mu e^{\textbf{i}\frac{2\pi t}{q}}\in\Lambda_{j}(A)$, for $t=0, 1,\ldots, q-1$

\item[\textbf{III.}] $\Lambda_{j}(A)$ is symmetric with respect to the origin, if $q=2t$.
\end{description}
\end{lemma}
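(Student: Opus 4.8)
The plan is to exploit the cyclic block structure of $C$ in \eqref{eq1} by conjugating with a block-diagonal unitary built from roots of unity, exactly as in the classical argument for the ordinary numerical range. Concretely, let $\omega=e^{\mathbf{i}\frac{2\pi}{q}}$ and, matching the partition of $C$ into its $q$ diagonal blocks of sizes $n_{1},\ldots,n_{q}$, set $D=\mathrm{diag}(\omega^{0}I_{n_{1}},\omega^{1}I_{n_{2}},\ldots,\omega^{q-1}I_{n_{q}})$. A direct computation on the off-diagonal blocks $C_{i,i+1}$ (and the corner block $C_{q1}$, where the exponent wraps around mod $q$) shows $D^{*}CD=\omega\,C$, i.e. $\omega C$ is unitarily similar to $C$. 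Since $A$ is permutation (hence unitarily) similar to $C$, we get $A$ unitarily similar to $\omega A$, and more generally to $\omega^{t}A$ for every $t=0,\ldots,q-1$. I would state this rotation identity as the single computational core of the proof.

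From $U^{*}AU=\omega^{t}A$ for a unitary $U$, part \textbf{I} is immediate: the defining relation \eqref{def2} is invariant under unitary similarity (if $X^{*}AX=\lambda I_{j}$ with $X^{*}X=I_{j}$, then $(U^{*}X)^{*}(\omega^{t}A)(U^{*}X)=\omega^{t}\lambda I_{j}$ — wait, one must be careful: unitary similarity of $A$ and $\omega^t A$ gives $\Lambda_j(A)=\Lambda_j(U^*AU)=\Lambda_j(\omega^t A)$ directly from the well-known fact that $\Lambda_j$ is a unitary similarity invariant, which I would cite from \cite{Choi} or \cite{Poon-Li-Sze}). So $\Lambda_{j}(A)=\Lambda_{j}(\omega^{t}A)$, which is \textbf{I}. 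For part \textbf{II}, I would use the elementary scaling property $\Lambda_{j}(\alpha A)=\alpha\Lambda_{j}(A)$ for any scalar $\alpha$ (immediate from \eqref{def2}); combined with \textbf{I} this yields $\Lambda_{j}(A)=\Lambda_{j}(\omega^{t}A)=\omega^{t}\Lambda_{j}(A)$, so $\mu\in\Lambda_{j}(A)\iff\omega^{t}\mu\in\Lambda_{j}(A)$. Part \textbf{III} is the special case $q=2t$ of \textbf{II}, since then $e^{\mathbf{i}\frac{2\pi t}{q}}=e^{\mathbf{i}\pi}=-1$, giving $\mu\in\Lambda_{j}(A)\iff-\mu\in\Lambda_{j}(A)$, i.e. symmetry about the origin.

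I do not anticipate a serious obstacle; the only thing needing care is bookkeeping in the block computation $D^{*}CD=\omega C$, specifically checking that the corner block $C_{q1}$ transforms correctly: it sits in the $(q,1)$ position, so it gets multiplied by $\bar\omega^{\,q-1}\omega^{0}=\omega^{-(q-1)}=\omega$ (using $\omega^{q}=1$), consistent with the factor $\omega$ picked up by every superdiagonal block $C_{i,i+1}$, which gets $\bar\omega^{\,i-1}\omega^{i}=\omega$. The hypothesis $r_{j}(A)\notin\{-\infty,0\}$ is only there to guarantee that $\Lambda_j(A)$ is nonempty and not the single point $\{0\}$, so that the rotational statements are non-vacuous; the algebraic identities hold regardless. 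I would also remark that the same block-diagonal trick works with any permutation similarity between $A$ and $C$ folded in, since permutation matrices are unitary and commute through the argument without changing $\Lambda_j$.
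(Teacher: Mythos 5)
Your proof is correct and follows essentially the same route as the paper's: both rest on the identity $D^{*}CD=e^{\mathbf{i}\frac{2\pi t}{q}}C$ for the block-diagonal unitary $D=\mathrm{diag}(\omega^{0}I_{n_{1}},\ldots,\omega^{q-1}I_{n_{q}})$ (which the paper imports from Issos' Theorem~6 and you verify directly), then transport it to $A$ through the permutation similarity, use unitary-similarity invariance of $\Lambda_{j}$ for \textbf{I}, the scaling property $\Lambda_{j}(\alpha A)=\alpha\Lambda_{j}(A)$ for \textbf{II}, and the specialization $e^{\mathbf{i}\pi}=-1$ for \textbf{III}. The block bookkeeping for $C_{q1}$ is right, so nothing is missing.
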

\begin{proof}
\textbf{I.} By the proof of Theorem 6 in \cite{Issos}, $D^{-1}CD=e^{\textbf{i}\theta}C$, where $D=I_{n_{1}}\oplus e^{\textbf{i}\theta}I_{n_{2}}\oplus \ldots\oplus e^{\textbf{i}(q-1)\theta}I_{n_{q}}$ is unitary diagonal with $\theta=\frac{2\pi t}{q}$ and $n_{1}+\ldots+n_{q}=n$, where $n_{l}$ $(l=1, \ldots, q)$ is the dimension of the $l$-th diagonal block of $C$. Hence, $e^{\textbf{i}\theta}A=(P^{T}DP)^{-1}A(P^{T}DP)$, where $P$ is the permutation matrix such that $A=P^{T}CP$, and  $\Lambda_{j}(A)=\Lambda_{j}(e^{\textbf{i}\theta}A)$, for any $j=1, \ldots, k$.\\
\textbf{II.} By statement (I), we have that
\[
\mu\in\Lambda_{j}(A)\Leftrightarrow e^{\textbf{i}\frac{2\pi t}{q}}\mu\in\Lambda_{j}(e^{\textbf{i}\frac{2\pi t}{q}}A)=\Lambda_{j}(A),
\]
for all $t=0, 1, \ldots, q-1$ and $j=1, \ldots, k$.\\
\textbf{III.} Let $q=2t$, then by (II), $\mu\in\Lambda_{j}(A)$ if and only if $e^{\textbf{i}\frac{2\pi t}{2t}}\mu\in\Lambda_{j}(A)$, i.e. $e^{\textbf{i}\pi}\mu=-\mu\in\Lambda_{j}(A)$ for $j=1, \ldots, k$.
\end{proof}

\begin{example}
Let the matrix $A=\left[\begin{smallmatrix}
  0 & 0 & i & 0 \\
  0 & 0 & -i & 0 \\
  0 & 0 & 0 & 2 \\
  3+2i & 1 & 0 & 0 \\
\end{smallmatrix}\right]$ as in \eqref{eq1} with $q=3$. The boundary of $\Lambda_{2}(A)$ is illustrated below by  the arched triangle. Apparently, $\Lambda_{2}(A)$ is rotationally invariant about the origin through an angle of $\frac{2\pi t}{3}$ for $t=0,1,2$, confirming  Lemma \ref{lem1}(I,II).
\begin{center}
    \includegraphics[width=0.35\textwidth]{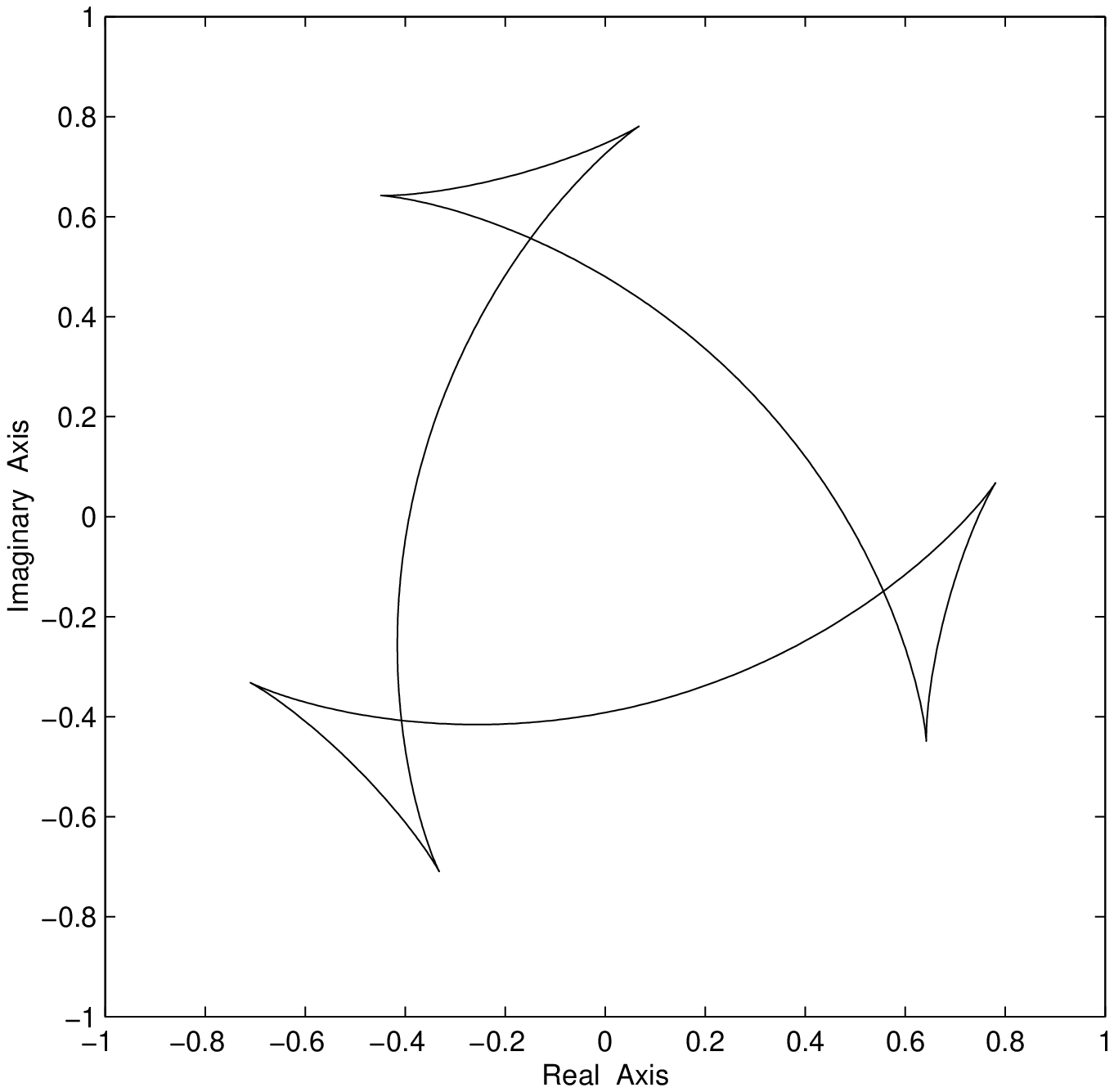}                                         
\end{center}
\end{example}
Any matrix $A$ being permutation similar to the matrix $C$ in \eqref{eq1} is said to be \emph{$q$-cyclic} and the largest positive integer $q$ such that $A$ is $q$-cyclic is called the \emph{cyclic index} of $A$.

Regarding  the number of  maximal elements in $\Lambda_{k}(A)$ and their location on the complex plane, we refer to  the following result.
\begin{proposition}\label{pr3}
Let $A\in\M_{n}(\Real)$ be imprimitive with index of imprimitivity $q>1$ such that $r_{k}(A)>0$. Then
\begin{equation}\label{eq2}
\mathcal{F}_{j}(A)=\set{r_{j}(A)e^{\textbf{i}\,(\theta_{j}+\frac{2\pi t}{q})}: t=0, \ldots, q-1},
\end{equation}
for every $j=1, \ldots, k$ with $\theta_{j}=0$ or $\theta_{j}=\frac{\pi}{q}$.
\end{proposition}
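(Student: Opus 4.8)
The plan is to leverage the structural results already established, reducing the claim to an analysis of the maximal elements via rotational invariance plus a positivity/realness argument. First I would observe that, since $A$ is imprimitive with index of imprimitivity $q>1$, the Frobenius normal form guarantees that $A$ is permutation similar to a matrix $C$ of the cyclic shape in \eqref{eq1} with $q$ diagonal blocks (this is the standard spectral characterization of imprimitive matrices). Hence Lemma~\ref{lem1} applies: for each $j=1,\ldots,k$ with $r_{j}(A)>0$, the set $\Lambda_{j}(A)$ — and therefore the set $\mathcal{F}_{j}(A)$ of its maximal elements — is invariant under multiplication by $e^{\mathbf{i}2\pi/q}$. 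Consequently $\mathcal{F}_{j}(A)$ is a union of full orbits of the cyclic group of order $q$ acting by these rotations; in particular $\abs{\mathcal{F}_{j}(A)}$ is a multiple of $q$ and, once one maximal element $r_{j}(A)e^{\mathbf{i}\theta_{j}}$ is located, all of $r_{j}(A)e^{\mathbf{i}(\theta_{j}+2\pi t/q)}$, $t=0,\ldots,q-1$, belong to $\mathcal{F}_{j}(A)$.

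The substance of the proof is then twofold: showing that $\mathcal{F}_{j}(A)$ is exactly one such orbit (not several), and pinning down $\theta_{j}\in\{0,\pi/q\}$. For the first part I would invoke the real-matrix hypothesis $A\in\M_{n}(\Real)$: since $X^{*}AX=\lambda I_{j}$ with $X^{*}X=I_{j}$ implies, by conjugating, $\overline{X}^{*}A\overline{X}=\overline{\lambda}I_{j}$, the set $\Lambda_{j}(A)$ is symmetric about the real axis. Combining reflection across $\Real$ with the $q$-fold rotational symmetry, the group generated is dihedral of order $2q$, so the orbit structure of maximal elements on the circle $\abs z = r_{j}(A)$ forces the maximal elements to sit at angles $\theta_{j}+2\pi t/q$ where the configuration is preserved by a reflection; this is possible for a single orbit precisely when $\theta_{j}\equiv 0$ or $\theta_{j}\equiv \pi/q \pmod{2\pi/q}$. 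To rule out a union of two or more distinct orbits I would argue as in Issos' treatment: if $\mu_{1},\mu_{2}$ are two maximal elements with $\mu_{1}/\mu_{2}$ not a $q$-th root of unity, one derives a contradiction with the extremal/defining property of $r_{j}(A)$ by an averaging or convexity argument on the relevant compressions $X^{*}AX$, using that $r_{j}(A)$ is genuinely attained only along the Perron-type directions; alternatively, the case $j=1$ is exactly Issos \cite[Th.4,7]{Issos}, and for general $j$ one uses the inclusion $\Lambda_{j}(A)\subseteq F(A)$ together with $\abs z = r_{j}(A)$ to confine the candidate angles, then the rotational invariance from Lemma~\ref{lem1} to conclude the orbit is full and unique.

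The main obstacle I anticipate is the uniqueness of the orbit — i.e. proving $\mathcal{F}_{j}(A)$ cannot contain two rotationally-inequivalent maximal points — because unlike the classical numerical range $F(A)$, which is convex, the higher rank numerical range $\Lambda_{j}(A)$ is convex but the Perron-Frobenius machinery (positive eigenvectors, the theory of \cite{Issos}) does not transfer verbatim to compressions of dimension $j>1$. I would handle this by working on the boundary: a point $z\in\mathcal{F}_{j}(A)$ with $\abs z=r_{j}(A)$ is automatically a boundary point of the convex set $\Lambda_{j}(A)$ and also a point of maximum modulus in $F(A)$ unless $r_{j}(A)<r(A)$; in the former case Issos pins $z$ to $\mathcal{F}(A)$ and the result follows, while in the latter case I would use that $z$ maximizes $\abs z$ over $\Lambda_{j}(A)$ to show the supporting line of $\Lambda_{j}(A)$ at $z$ is perpendicular to the ray through $z$, and then transport this extremal data through the unitary diagonal conjugation $D^{-1}CD=e^{\mathbf{i}\theta}C$ of Lemma~\ref{lem1} to see that the phases $\theta_{j}$ are forced. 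Finally I would record that $\theta_{j}=0$ occurs when $q$ is odd or when the ``positive'' orbit is selected, and $\theta_{j}=\pi/q$ in the complementary imprimitive configuration, exactly paralleling the $k=1$ dichotomy, and note that when $q=2t$ statement~III of Lemma~\ref{lem1} makes both choices give origin-symmetric sets, consistent with \eqref{eq2}.
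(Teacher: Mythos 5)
Your overall skeleton matches the paper's: you get the containment $\set{r_{j}(A)e^{\textbf{i}(\theta_{j}+\frac{2\pi t}{q})}: t=0,\ldots,q-1}\subseteq\mathcal{F}_{j}(A)$ from the cyclic normal form together with Lemma~\ref{lem1}(II), and you determine $\theta_{j}\in\set{0,\frac{\pi}{q}}$ exactly as the paper does, by combining the $q$-fold rotational invariance with the symmetry of $\Lambda_{j}(A)$ about the real axis (for a single orbit this forces $2\pi-\theta_{j}=\theta_{j}+\frac{2\pi(q-1)}{q}$, i.e. $\theta_{j}=\frac{\pi}{q}$ when $\theta_{j}\neq0$). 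Those two parts are fine and are essentially the paper's steps.

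The gap is precisely where you anticipate it: the reverse inclusion, i.e. that $\mathcal{F}_{j}(A)$ is a \emph{single} orbit. None of the routes you sketch closes it. The reduction to Issos via $\Lambda_{j}(A)\subseteq F(A)$ only applies when $r_{j}(A)=r(A)$, which generally fails for $j\geq2$ (the paper's own first example exhibits $\Lambda_{2}(A)$ strictly inside $F(A)$); the ``averaging or convexity argument along Perron-type directions'' is not an argument, since the positive Perron eigenvector gives no rank-$j$ isometry to compress with; and the observation that the supporting line at a maximum-modulus point is perpendicular to the radius constrains nothing about which angles occur. The paper closes this step by a different device: it identifies the index of imprimitivity $q$ with the largest order of the cyclic group of angles $\phi$ for which $A$ is unitarily diagonally similar to $e^{\textbf{i}\phi}A$, and argues that a maximal element at an angle outside the orbit would produce a strictly larger such group, contradicting maximality. (Even that argument tacitly assumes that every maximal element of $\Lambda_{j}(A)$ --- not merely of $F(A)$ --- induces a unitary diagonal similarity, which is the rank-one fact from Li--Tam--Wu used in Lemma~\ref{lem2}; if you follow the paper's route you should make that implication explicit for $j\geq2$.) As written, your proof establishes only the inclusion $\supseteq$ in \eqref{eq2}, plus the value of $\theta_{j}$ conditional on exactness of the orbit.
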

\begin{proof}
Since $q>1$ is the index of imprimitivity of $A$, there is a permutation matrix $P$ such that $$P^{T}AP=\begin{bmatrix}
  0 & C_{12} & 0 & \cdot & 0 \\
  0 & 0 & C_{23}  & \cdot & 0 \\
  \cdot & \cdot & \cdot & \cdot & \cdot \\
  0 & \cdot & \cdot & 0 & C_{q-1,q} \\
  C_{q1} & 0 & \cdot & \cdot & 0 \\
\end{bmatrix}.$$
Let $\theta_{j}\in[0,2\pi)$ be the principal argument such that  $0<r_{j}(A)\in\Lambda_{j}(e^{-\textbf{i}\theta_{j}}A)$ with $j=1, \ldots, k$. Then, by Lemma \ref{lem1}(II), we have $r_{j}(A)e^{\textbf{i}(\theta_{j}+\frac{2\pi t}{q})}\in\Lambda_{j}(A)$ for $t=0, 1, \ldots, q-1$ and $j=1, \ldots, k$, whereupon we obtain
\[
\mathcal{F}_{j}(A)\supseteq\set{r_{j}(A)e^{\textbf{i}(\theta_{j}+\frac{2\pi t}{q})}: t=0, \ldots, q-1}
\]
for every $j=1, \ldots, k$.\\
The index of imprimitivity $q$ is equal to the largest positive integer  such that $A$ is unitarily diagonally similar to the matrix
$e^{\textbf{i}\frac{2\pi}{q}}A$,  equivalently, the matrices $e^{-\textbf{i}\theta_{j}}A$ and $e^{\textbf{i}(-\theta_{j}+\frac{2\pi}{q})}A$
are unitarily diagonally similar for the largest positive integer q. Therefore, the set
\[
\set{0,\frac{2\pi}{q}, \ldots, \frac{2\pi(q-1)}{q}}
\]
is the cyclic group modulo $2\pi$ of the largest order, concluding that there does not exist $\phi=\frac{2\pi}{p}<\frac{2\pi}{q}$ such that $r_{j}(A)e^{\textbf{i}\phi}\in\Lambda_{j}(e^{-\textbf{i}\theta_{j}}A)$ for  $j=1, \ldots, k$. Hence we establish the equality
\begin{equation}\label{eq3}
\mathcal{F}_{j}(A)=\set{r_{j}(A)e^{\textbf{i}(\theta_{j}+\frac{2\pi t}{q})}: t=0, \ldots, q-1}.
\end{equation}
In addition, if we denote by $\overline{\,\,\,\,\cdot\,\,}$ the conjugate of a set,  it is clear that $\Lambda_{j}(A)=\overline{\Lambda_{j}(A)}$, since $A\in\M_{n}(\Real)$, i.e. $\Lambda_{j}(A)$ is symmetric with respect to the real axis.  Due to this symmetry and the equation \eqref{eq3}, if we consider $\theta_{j}\neq0$, we obtain $2\pi-\theta_{j}=\theta_{j}+\frac{2\pi(q-1)}{q}$. Hence,  $\theta_{j}=\frac{\pi}{q}$ and the proof is complete.
\end{proof}
In view of the preceding proposition, the number of elements of maximum modu\-lus in each $\Lambda_{j}(A)$ of a nonnegative and irreducible matrix $A$, for $j=1, 2, \ldots, k$, is equal to the index of imprimitivity $q$. Especially, they are all successively distributed around a circle centered at the origin through the constant  angle of $\frac{2\pi}{q}$. Furthermore, it is also observed that after a clockwise rotation of $\Lambda_{j}(A)$ about the origin through the angle of $\frac{\pi}{q}$, it is achieved $0<r_{j}(A)\in\Lambda_{j}(e^{-\textbf{i}\frac{\pi}{q}}A)$.

On the other hand, if we consider the primitive class of nonnegative matrices $A$ $(q=1)$, then the sets $\Lambda_{j}(A)$, for $j=2, \ldots, k$ do not necessarily have only one ma\-ximal element, contrary to the $F(A)$. This implication is justified by the symmetry of the sets $\Lambda_{j}(A)$ with respect to the real axis, when $r_{j}(A)e^{\textbf{i}\theta_{j}}\in\Lambda_{j}(A)$ for some $\theta_{j}\in(0,2\pi)$, then also $r_{j}(A)e^{-\textbf{i}\theta_{j}}\in\Lambda_{j}(A)$.\\
\begin{example}
Let the $8\times 8$ imprimitive  matrix
     $A=\left[\begin{smallmatrix}
     0 & 0 & 2 & 0 & 0 & 6 & 0 & 0\\
     1 & 0 & 0 & 0 & 0 & 0 & 7 & 0\\
     0 & 0 & 0 & 2 & 3 & 0 & 0 & 4\\
     0 & 3 & 0 & 0 & 0 & 0 & 0 & 0\\
     0 & 3 & 0 & 0 & 0 & 0 & 0 & 0\\
     0 & 0 & 0 & 4 & 0 & 0 & 0 & 2\\
     0 & 0 & 1 & 0 & 0 & 3 & 0 & 0\\
     0 & 9 & 0 & 0 & 0 & 0 & 0 & 0\\ \end{smallmatrix}\right]$
with index of imprimitivity $q=4$. The boundaries of $F(A)\equiv\Lambda_{1}(A)\supseteq\Lambda_{2}(A)\supseteq\Lambda_{3}(A)$ are illustrated below on the left figure and we verify that all $\Lambda_{j}(A)$  have $q=4$ maximal elements $(j=1, 2, 3)$, which are equally spaced through the angle of $t\frac{\pi}{2}$, $t=0, 1, 2, 3$. Specifically, $r_{2}(A)e^{\textbf{i}\frac{\pi}{4}}\in\Lambda_{2}(A)$, $r_{3}(A)\in\Lambda_{3}(A)$. \\
For the primitive case, let the nonnegative irreducible  matrix
$B=\left[\begin{smallmatrix}
     0 & 1 & 0 & 0 & 0 & 0\\
     0 & 0 & 1 & 0 & 0 & 0\\
     0 & 0 & 0 & 1 & 0 & 0\\
     0 & 0 & 0 & 0 & 1 & 0\\
     0 & 0 & 0 & 0 & 0 & 1\\
     1 & 1 & 1 & 0 & 0 & 0\\ \end{smallmatrix}\right]$
with only one maximal eigenvalue. The boundaries of $F(B)\supseteq\Lambda_{2}(B)$ are illustrated on the right figure, revealing that $\Lambda_{2}(B)$ has two symmetric maxi\-mal elements, with respect to the negative real semi axis, whereas $F(B)$ has only one maximal element $r(B)$.
The maximal eigenvalues of $A$ and $B$ are marked by ''+'' in both figures.
\begin{center}
  \begin{tabular}{cc}
    \includegraphics[width=0.35\textwidth]{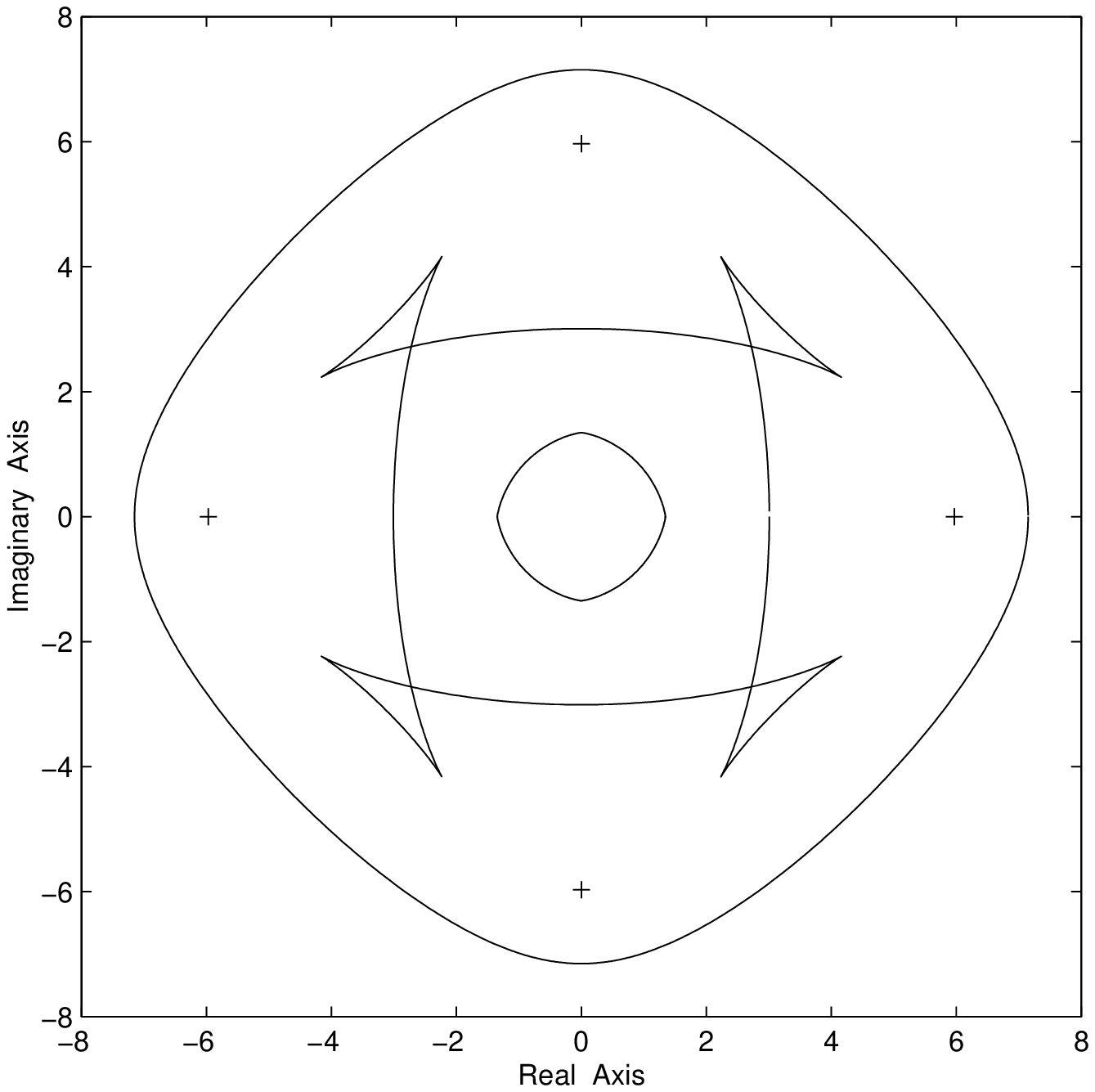}                                                     
    \includegraphics[width=0.35\textwidth]{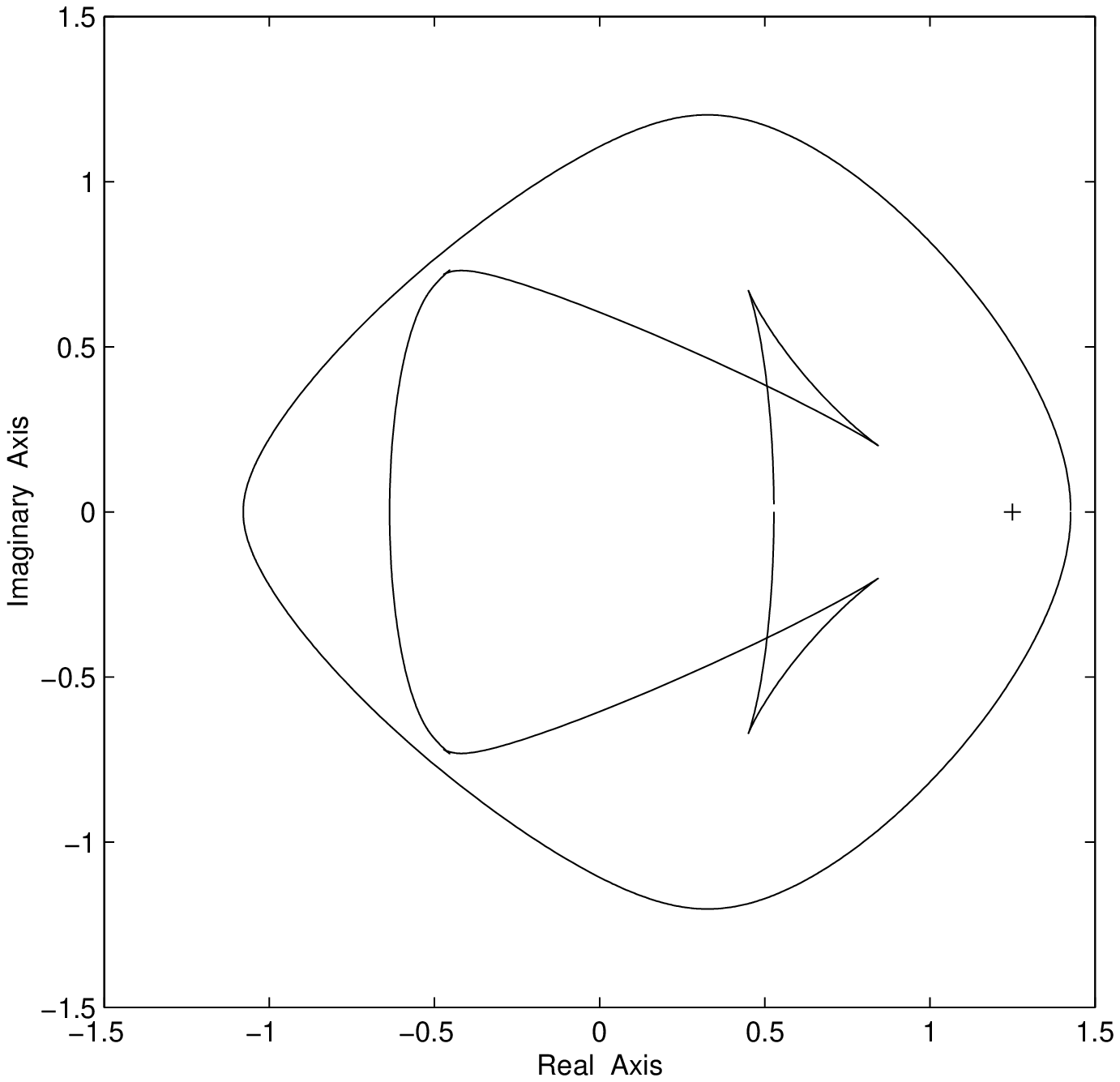}                                                    
    \end{tabular}
\end{center}
\end{example}

\begin{remark}
It is quite interesting to note that the $n$-cyclic permutation matrix $P_{n}=\left[ \begin{smallmatrix}0 & I_{n-1} \\1 & 0 \\ \end{smallmatrix}\right]\in\M_{n}(\Real)$ is a special form of imprimitive matrix of index $n$, with spectrum  the $n$-th roots of unity $z_{t}=e^{2\pi t\textbf{i}/n}$, $t=0, \ldots, n-1$. Apparently, $P_{n}$ is unitary and $F(P_{n})=co(\set{z_{0}, z_{1}, \ldots, z_{n-1}})$, where $co(\cdot)$ denotes the convex hull of a set. By corollary 2.8 in \cite{Gau}, whenever $2k<n$, we have that
\[
\Lambda_{k}(P_{n})=co(\set{\tilde{z}_{0}, \tilde{z}_{1}, \ldots, \tilde{z}_{n-1}}),
\]
with $\tilde{z}_{t}$ $(t=0, \ldots, n-1)$ to be intersection points of the line segments $[z_{t},z_{t+k}]$ and $[z_{t+1},z_{t+n-k+1}]$, when $z_{j}=z_{j-n}$ for $j>n-1$.\\
In addition, for the powers of $P_{n}$,  we have that $\Lambda_{k}(P_{n}^{\alpha})=\Lambda_{k}(P_{n})$ $(1\leq\alpha\leq n-1)$, since the matrices  $P_{n}^{\alpha}$ are permutation similar to $P_{n}$. Also, $P_{n}^{n}=I_{n}$, whereupon $\Lambda_{k}(P_{n}^{n})=\set{1}$.\\
For instance, $F(P_{5})$ and $\Lambda_{2}(P_{5})$ for the $5$-cyclic permutation $P_{5}=\left[ \begin{smallmatrix}0 & 1 & 0 & 0 & 0 \\
0 & 0 & 1 & 0 & 0 \\ 0 & 0 & 0 & 1 & 0 \\0 & 0 & 0 & 0 & 1 \\1 & 0 & 0 & 0 & 0 \\ \end{smallmatrix}\right]$ are illustrated below.
\begin{center}
    \includegraphics[width=0.35\textwidth]{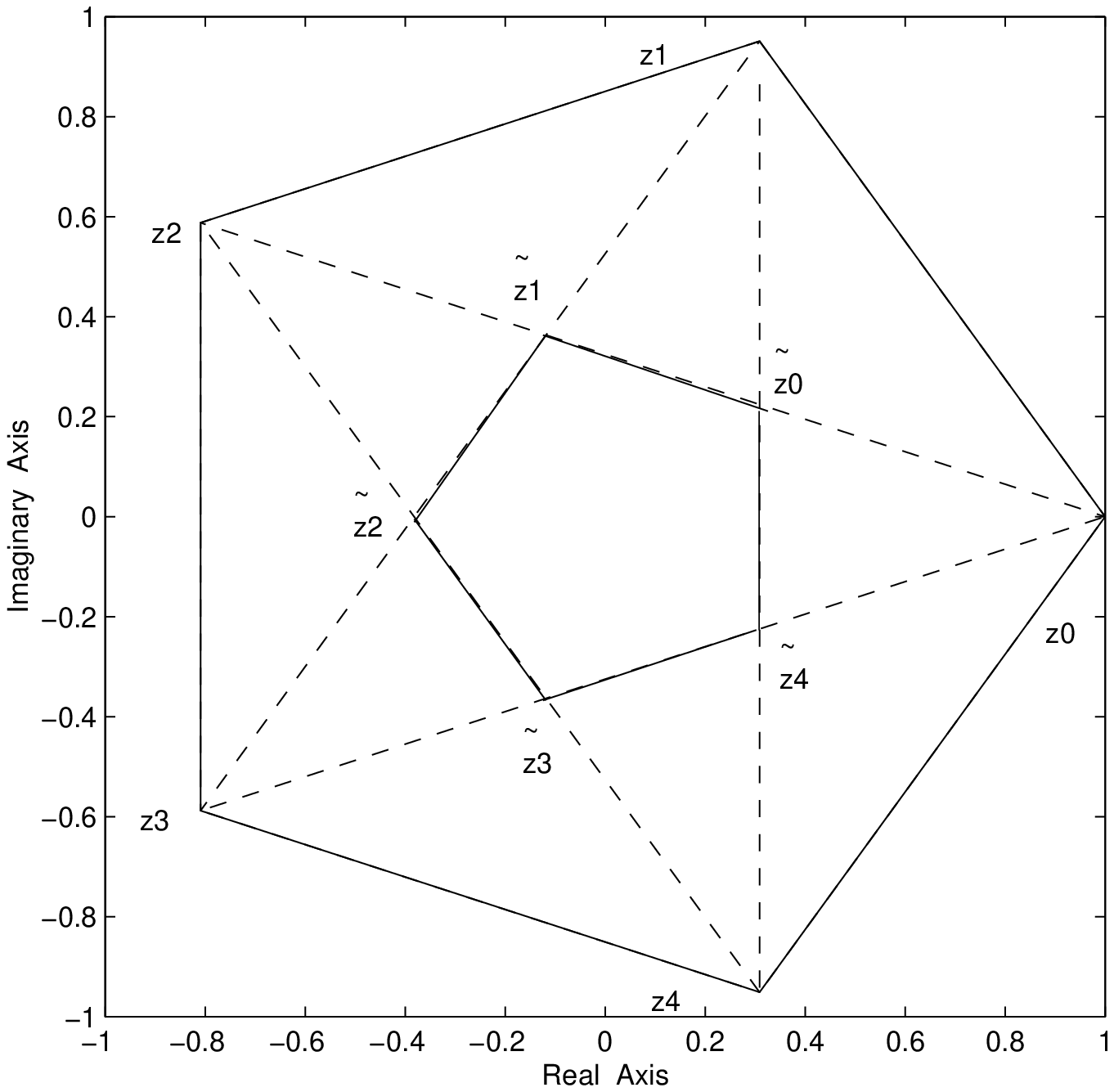}                                                           
\end{center}
\end{remark}

\begin{lemma}\label{lem2}
Let $A\in\M_{n}(\Real)$ be nonnegative with  irreducible hermitian part $H(A)$ such that $0<r_{j}(A)\in\Lambda_{j}(e^{-\textbf{\textrm{i}}\theta_{j}}A)$ for some $\theta_{j}\in[0,2\pi)$ and every $j=1, 2, \ldots, k$. If there exists an angle $\phi\in\Real$ such that $r(A)e^{\textbf{i}\phi}\in F(A)$, then $r_{j}(A)e^{\textbf{i}\phi}\in\Lambda_{j}(e^{-\textbf{i}\theta_{j}}A)$ for every $j=1, \ldots, k$.
\end{lemma}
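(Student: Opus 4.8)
The plan is to reduce the statement to a single structural fact about $A$, namely that $A$ is unitarily similar, through a \emph{diagonal} unitary, to $e^{\textbf{i}\phi}A$. Granting this, the conclusion is formal: from the definition of the higher rank numerical range one has the two elementary identities $\Lambda_{j}(U^{*}MU)=\Lambda_{j}(M)$ for unitary $U$ and $\Lambda_{j}(\alpha M)=\alpha\Lambda_{j}(M)$ for $\alpha\neq0$, hence $\Lambda_{j}(A)=\Lambda_{j}(e^{\textbf{i}\phi}A)=e^{\textbf{i}\phi}\Lambda_{j}(A)$, so that $\Lambda_{j}(A)$ is invariant under the rotation $z\mapsto e^{\textbf{i}\phi}z$, and so then is $\Lambda_{j}(e^{-\textbf{i}\theta_{j}}A)=e^{-\textbf{i}\theta_{j}}\Lambda_{j}(A)$. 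Since $r_{j}(A)\in\Lambda_{j}(e^{-\textbf{i}\theta_{j}}A)$ by hypothesis, rotating yields $r_{j}(A)e^{\textbf{i}\phi}\in\Lambda_{j}(e^{-\textbf{i}\theta_{j}}A)$ for every $j=1,\ldots,k$, which is exactly the assertion.

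To produce the diagonal unitary I would first extract spectral information from the hypothesis $r(A)e^{\textbf{i}\phi}\in F(A)$. As this point has modulus $r(A)$, the tangent line $\set{z:\RE(e^{-\textbf{i}\phi}z)=r(A)}$ to the circle $\abs z=r(A)$ supports the convex set $F(A)$, so that $\lambda_{\max}(H(e^{-\textbf{i}\phi}A))=\max\RE F(e^{-\textbf{i}\phi}A)=r(A)$, where $H(M)=\tfrac12(M+M^{*})$. Because $A$ is real and nonnegative, the Hermitian parts satisfy the entrywise domination $\abs{H(e^{-\textbf{i}\phi}A)}\leq H(A)$, hence $\rho(H(e^{-\textbf{i}\phi}A))\leq\rho(H(A))=\lambda_{\max}(H(A))\leq r(A)$; combined with the previous line this forces $\rho(H(e^{-\textbf{i}\phi}A))=\rho(H(A))=\lambda_{\max}(H(A))=r(A)$. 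Now the irreducibility of $H(A)$ lets me invoke the equality case of the Perron--Frobenius (Wielandt) comparison theorem \cite{H.J.}: it gives $\abs{H(e^{-\textbf{i}\phi}A)}=H(A)$ together with $H(e^{-\textbf{i}\phi}A)=e^{\textbf{i}\gamma}D_{0}H(A)D_{0}^{-1}$ for a real $\gamma$ and a diagonal unitary $D_{0}$; since both sides are Hermitian and $\rho(H(A))>0$, necessarily $e^{\textbf{i}\gamma}=\pm1$.

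The remaining step, which I expect to be the main obstacle, is to upgrade these relations between the Hermitian parts into the sought relation on $A$ itself. Reading $\abs{H(e^{-\textbf{i}\phi}A)}=H(A)$ entrywise, equality in $\abs{e^{-\textbf{i}\phi}a_{k\ell}+e^{\textbf{i}\phi}a_{\ell k}}=a_{k\ell}+a_{\ell k}$ forces, for each pair $(k,\ell)$, either $\phi\equiv0\pmod{\pi}$ or $a_{k\ell}a_{\ell k}=0$. In the principal case $a_{k\ell}a_{\ell k}=0$ for all $k,\ell$, evaluating $H(e^{-\textbf{i}\phi}A)=e^{\textbf{i}\gamma}D_{0}H(A)D_{0}^{-1}$ along the positive entries of $A$ gives $(D_{0})_{kk}\overline{(D_{0})_{\ell\ell}}=e^{-\textbf{i}(\phi+\gamma)}$ whenever $a_{k\ell}>0$, hence $D_{0}AD_{0}^{-1}=e^{-\textbf{i}(\phi+\gamma)}A$; if $e^{\textbf{i}\gamma}=1$ this is already the required similarity, and if $e^{\textbf{i}\gamma}=-1$ then $\lambda_{\min}(H(A))=-\rho(H(A))$, so the connected weighted graph of the symmetric matrix $H(A)$ is bipartite, $A$ has zero diagonal, and its bipartition signature matrix $E$ satisfies $EAE=-A$, so that composing the two similarities absorbs the sign. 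The degenerate cases are disposed of directly: $\phi\equiv0\pmod{2\pi}$ is trivial since then $e^{-\textbf{i}\phi}A=A$, while $\phi\equiv\pi\pmod{2\pi}$ gives $-r(A)\in F(A)$, which again forces $\lambda_{\min}(H(A))=-\rho(H(A))$, whence $H(A)$ is bipartite, $A$ has zero diagonal, and $EAE=-A=e^{-\textbf{i}\phi}A$.

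In brief, the technical heart of the proof is this last case analysis: the comparison theorem only controls $H(e^{-\textbf{i}\phi}A)$, so one must rebuild a similarity for $A$ while eliminating both the stray scalar $e^{\textbf{i}\gamma}$ and the $\phi\leftrightarrow\phi+\pi$ ambiguity, and the mechanism making this work is that exactly when that ambiguity is genuine, $H(A)$ is forced to be bipartite and the accompanying $\pm1$ signature similarity of $A$ compensates for it precisely.
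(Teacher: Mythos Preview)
Your argument is correct and follows the same two-step skeleton as the paper: first show that the hypothesis $r(A)e^{\textbf{i}\phi}\in F(A)$ forces a diagonal unitary similarity $e^{-\textbf{i}\phi}A=D^{-1}AD$, and then read off the rotational invariance of every $\Lambda_{j}$. The difference lies entirely in the first step. The paper disposes of it in one line by citing Proposition~3.7 of Li--Tam--Wu \cite{Li-Tam-Wu}, which is precisely the statement that for a nonnegative real matrix with irreducible Hermitian part, $r(A)e^{\textbf{i}\phi}\in F(A)$ implies $e^{-\textbf{i}\phi}A=D^{-1}AD$ with $D$ unitary diagonal. You instead reprove this result from scratch: the Wielandt comparison gives control only of $H(e^{-\textbf{i}\phi}A)$, and your case analysis (the dichotomy $\phi\equiv0\pmod{\pi}$ versus $a_{k\ell}a_{\ell k}=0$, together with the bipartiteness mechanism absorbing the stray sign $e^{\textbf{i}\gamma}=-1$) is exactly what it takes to lift the conclusion back to $A$. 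Your route is self-contained and exposes the combinatorial reason the result holds, at the cost of being substantially longer than simply invoking \cite{Li-Tam-Wu}; the paper's route is a one-line citation.
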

\begin{proof}
Suppose $r(A)e^{\textbf{i}\phi}\in F(A)$ for some $\phi\in\Real$, then by proposition 3.7 in \cite{Li-Tam-Wu} we have that $e^{-\textbf{i}\phi}A=D^{-1}AD$ for some unitary diagonal matrix $D$. Obviously, $0<r_{j}(A)\in\Lambda_{j}(e^{-\textbf{i}\theta_{j}}A)=\Lambda_{j}(e^{-\textbf{i}(\theta_{j}+\phi)}A)$, therefore $r_{j}(A)e^{\textbf{i}\phi}\in\Lambda_{j}(e^{-\textbf{i}\theta_{j}}A)$ for every $j=1, \ldots, k$.
\end{proof}

Investigating $\Lambda_{k}(A)$  of a nonnegative matrix $A$ in terms of the irreducibi\-lity of the hermitian part $H(A)$ of the matrix $A$, we extend an analogous discussion developed in \cite{Li-Tam-Wu,M-Ps-Ts}.
\begin{proposition}\label{pr4}
Let $A\in\M_{n}(\Real)$, $A\geq0$ with $H(A)$ be irreducible and $r_{k}(A)>0$. Then either $\mathcal{F}_{j}(A)$ coincides with the circle $\mathcal{S}(0,r_{j}(A))$  for every $j=1, \ldots, k$ or $\mathcal{F}_{j}(A)=\set{r_{j}(A)e^{\textbf{i}\,(\theta_{j}+\frac{2\pi t}{q})}: t=0, \ldots, q-1}$ for every $j=1, \ldots, k$, where $\theta_{j}=0$ or $\theta_{j}=\frac{\pi}{q}$  and $q>1$ is the largest positive integer such that $A$ is diagonally  similar to the matrix $e^{\textbf{i}\frac{2\pi}{q}}A$.
\end{proposition}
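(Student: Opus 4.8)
The plan is to split into two cases according to whether the Hermitian part $H(A)$ is primitive or imprimitive, and in each case to transport information from the classical numerical range $F(A)$ to each $\Lambda_j(A)$ via Lemma~\ref{lem2}. First I would recall the known dichotomy for $F(A)$ of a nonnegative matrix with irreducible Hermitian part (the result of \cite{Li-Tam-Wu} that underlies Lemma~\ref{lem2}): either $\mathcal{F}(A)$ is the entire circle $\mathcal{S}(0,r(A))$, which happens precisely when $r(A)e^{\mathbf{i}\phi}\in F(A)$ for all real $\phi$, or else there is a largest integer $q>1$ with $e^{-\mathbf{i}\frac{2\pi}{q}}A=D^{-1}AD$ for a unitary diagonal $D$, and then $\mathcal{F}(A)=\set{r(A)e^{\mathbf{i}(\alpha+\frac{2\pi t}{q})}:t=0,\dots,q-1}$. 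So the argument is really about pushing this alternative down to level $j$.

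For the \emph{circle case}: if $\mathcal{F}(A)=\mathcal{S}(0,r(A))$, then for every $\phi\in\Real$ we have $r(A)e^{\mathbf{i}\phi}\in F(A)$, so Lemma~\ref{lem2} gives $r_j(A)e^{\mathbf{i}\phi}\in\Lambda_j(e^{-\mathbf{i}\theta_j}A)$ for every $j=1,\dots,k$ and every $\phi$. Equivalently $r_j(A)e^{\mathbf{i}\psi}\in\Lambda_j(A)$ for all $\psi$, so each $\mathcal{F}_j(A)$ fills the circle $\mathcal{S}(0,r_j(A))$, which is the first alternative. For the \emph{finite case}: here $F(A)$ has exactly $q$ maximal elements and $q>1$ is the cyclic index appearing in the statement. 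The key observation is that $e^{-\mathbf{i}\frac{2\pi}{q}}A=D^{-1}AD$ forces, by the same argument as in Lemma~\ref{lem1}(I)--(II), that $\Lambda_j(A)=\Lambda_j(e^{-\mathbf{i}\frac{2\pi}{q}}A)$ and hence that $\Lambda_j(A)$ (and therefore $\mathcal{F}_j(A)$) is invariant under rotation by $\frac{2\pi}{q}$; applying Lemma~\ref{lem2} with the $q$ values $\phi=\alpha+\frac{2\pi t}{q}$ that realize the maximal elements of $F(A)$ shows $r_j(A)e^{\mathbf{i}(\theta_j+\frac{2\pi t}{q})}\in\Lambda_j(A)$, i.e. $\mathcal{F}_j(A)\supseteq\set{r_j(A)e^{\mathbf{i}(\theta_j+\frac{2\pi t}{q})}:t=0,\dots,q-1}$.

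The reverse inclusion — showing $\mathcal{F}_j(A)$ is \emph{exactly} these $q$ points and not more — I would handle exactly as in Proposition~\ref{pr3}: $q$ is maximal with the diagonal-similarity property, so there is no $\phi=\frac{2\pi}{p}<\frac{2\pi}{q}$ with $r_j(A)e^{\mathbf{i}\phi}\in\Lambda_j(e^{-\mathbf{i}\theta_j}A)$, hence the set of arguments of $\mathcal{F}_j(e^{-\mathbf{i}\theta_j}A)$ is a finite cyclic subgroup of $\Real/2\pi\Real$ of order exactly $q$; this pins down $\mathcal{F}_j(A)$ to the stated $q$-point set. Finally, the value of $\theta_j$ is determined by reality: since $A\in\M_n(\Real)$, each $\Lambda_j(A)=\overline{\Lambda_j(A)}$, so $\mathcal{F}_j(A)$ is symmetric about the real axis, which combined with \eqref{eq3}-type reasoning forces $2\pi-\theta_j=\theta_j+\frac{2\pi(q-1)}{q}$, i.e. $\theta_j=0$ or $\theta_j=\frac{\pi}{q}$.

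The main obstacle I anticipate is justifying that the \emph{same} dichotomy index $q$ works simultaneously for all $j$ and that the circle-versus-finite alternative is not allowed to vary with $j$: this is exactly where Lemma~\ref{lem2} does the heavy lifting, because it ties the occurrence of a maximal argument $\phi$ in $F(A)$ to its occurrence in every $\Lambda_j(e^{-\mathbf{i}\theta_j}A)$ at once. One should be slightly careful that Lemma~\ref{lem2} is stated under the hypothesis $0<r_j(A)\in\Lambda_j(e^{-\mathbf{i}\theta_j}A)$ for each $j$, which is guaranteed here because $r_k(A)>0$ forces $r_j(A)\geq r_k(A)>0$ for all $j\le k$ (using $\Lambda_j(A)\supseteq\Lambda_k(A)$), so each $\theta_j$ in the statement is well defined; this preliminary remark is worth making explicit at the start of the proof.
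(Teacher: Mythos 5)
Your proposal is correct and follows essentially the same route as the paper: it invokes the same dichotomy for $\mathcal{F}(A)$ from \cite{Li-Tam-Wu} and \cite{M-Ps-Ts}, transfers each alternative to every $\Lambda_{j}(A)$ via Lemma~\ref{lem2}, and settles the reverse inclusion and the values $\theta_{j}\in\set{0,\frac{\pi}{q}}$ by the maximality of $q$ and the real-axis symmetry exactly as in Proposition~\ref{pr3}. Your explicit preliminary remark that $r_{j}(A)\geq r_{k}(A)>0$ (so each $\theta_{j}$ is well defined) is a small but worthwhile clarification that the paper leaves implicit.
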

\begin{proof}
Suppose that for each $j=1, \ldots, k$ we have $0<r_{j}(A)\in\Lambda_{j}(e^{-\textbf{i}\theta_{j}}A)$ for some principal argument $\theta_{j}\in[0,2\pi)$. Due to proposition 3.11 in \cite{Li-Tam-Wu} and corollary 3.6 in \cite{M-Ps-Ts}, either  $\mathcal{F}(A)=\mathcal{S}(0,r(A))$ or $\mathcal{F}(A)=\set{r(A)e^{\textbf{i}\frac{2\pi t}{q}}: \right.$ $\left. t=0, \ldots, q-1}$, where $q$ is the maximum positive integer such that $A$ is unitarily diagonally similar to the matrix $e^{\textbf{i}\frac{2\pi}{q}}A$. The first equality of the sets  indicates that $r(A)e^{\textbf{i}\phi}\in F(A)$ for every angle $\phi\in\Real$, thus by Lemma \ref{lem2}, $r_{j}(A)e^{\textbf{i}\phi}\in\Lambda_{j}(e^{-\textbf{i}\theta_{j}}A)$ for every angle $\phi\in\Real$, concluding $\mathcal{F}_{j}(A)=\mathcal{S}(0,r_{j}(A))$ for all $j=1, \ldots, k$. For the second case where $r(A)e^{\textbf{i}\frac{2\pi t}{q}}\in F(A)$,  Lemma \ref{lem2} verifies $r_{j}(A)e^{\textbf{i}\frac{2\pi t}{q}}\in\Lambda_{j}(e^{-\textbf{i}\theta_{j}}A)$ and then
\[
\set{r_{j}(A)e^{\textbf{i}(\theta_{j}+\frac{2\pi t}{q})}: t=0, \ldots, q-1}\subseteq\mathcal{F}_{j}(A).
\]
The equality of the sets, with $\theta_{j}=0$ or $\frac{\pi}{q}$ is established similarly as in the proof of Proposition \ref{pr3}, since $q$ is identified with the largest posi\-tive integer such that $e^{-\textbf{i}\theta_{j}}A$ is unitarily
diagonally similar to the matrix $e^{\textbf{i}(-\theta_{j}+\frac{2\pi}{q})}A$ and $\Lambda_{j}(A)$ are symmetric with respect to the real axis for every $j=1,\ldots, k$.
\end{proof}
\begin{corollary}\label{cor4a}
Let $A\in\M_{n}(\Real)$, $A\geq0$ with  irreducible hermitian part. If $0<r_{j}(A)\notin\Lambda_{j}(A)$ for some $j=2, \ldots, k$ and $\Lambda_{j}(A)$  is not a circular disc, then $\Lambda_{j}(A)$ is symmetric with respect to the lines $\mathcal{L}_{\pm}=\set{ze^{\pm\textbf{i}\frac{\pi}{q}}: z\in\Real}$, where $q$ is the largest positive integer such that $A$ is diagonally similar to $e^{\textbf{i}\frac{2\pi}{q}}A$.
\end{corollary}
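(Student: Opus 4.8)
The plan is to derive Corollary \ref{cor4a} directly from Proposition \ref{pr4}. First I would observe that the hypotheses of Proposition \ref{pr4} are in force: $A\in\M_n(\Real)$ is nonnegative with irreducible hermitian part, and $r_j(A)>0$ (in particular $r_k(A)>0$ since the $\Lambda_j$ are nested and $r_j(A)\ge r_k(A)>0$ whenever all are positive; here we only need the fixed $j$ under discussion, so I would phrase the argument for that single $j$). By Proposition \ref{pr4}, for each such $j$ exactly one of the two alternatives holds: either $\mathcal{F}_j(A)=\mathcal{S}(0,r_j(A))$ is the full circle, or $\mathcal{F}_j(A)=\set{r_j(A)e^{\textbf{i}(\theta_j+\frac{2\pi t}{q})}:t=0,\ldots,q-1}$ with $\theta_j\in\set{0,\tfrac{\pi}{q}}$ and $q>1$ the largest positive integer for which $A$ is diagonally similar to $e^{\textbf{i}\frac{2\pi}{q}}A$.

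Next I would use the two standing assumptions of the corollary to pin down which alternative occurs and which value $\theta_j$ takes. The assumption that $\Lambda_j(A)$ is not a circular disc rules out $\mathcal{F}_j(A)=\mathcal{S}(0,r_j(A))$: if $\mathcal{F}_j(A)$ were the whole circle of radius $r_j(A)$, then by convexity and compactness $\Lambda_j(A)$ would contain that circle and be contained in the disc it bounds, forcing $\Lambda_j(A)$ to be exactly the closed disc $\set{z:\abs z\le r_j(A)}$ — contradicting the hypothesis. Hence the second alternative holds. Then the assumption $r_j(A)\notin\Lambda_j(A)$ excludes $\theta_j=0$, because $\theta_j=0$ would put $r_j(A)=r_j(A)e^{\textbf{i}(0+0)}\in\mathcal{F}_j(A)\subseteq\Lambda_j(A)$. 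Therefore $\theta_j=\tfrac{\pi}{q}$, and by Proposition \ref{pr3}'s reasoning (reused in Proposition \ref{pr4}) we get $0<r_j(A)\in\Lambda_j(e^{-\textbf{i}\pi/q}A)$.

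Finally I would translate $0<r_j(A)\in\Lambda_j(e^{-\textbf{i}\pi/q}A)$ into the claimed symmetry. Set $B=e^{-\textbf{i}\pi/q}A$. Since $r_j(B)=r_j(A)$ is attained at the positive real number $r_j(A)$, Lemma \ref{lem1}-type / Proposition \ref{pr3}-type arguments already establish that $B$ behaves like a nonnegative matrix in the relevant sense; more simply, $H(A)$ being irreducible and $r_j(A)\in F(A)\cap\Real$ forces (via the cited results from \cite{Li-Tam-Wu,M-Ps-Ts} used in Proposition \ref{pr4}) that $A$ is diagonally similar to a nonnegative matrix, whose higher rank numerical ranges are symmetric about the real axis; equivalently $\Lambda_j(B)=\overline{\Lambda_j(B)}$. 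Rotating back, $\Lambda_j(A)=e^{\textbf{i}\pi/q}\Lambda_j(B)$, so $\Lambda_j(A)$ is symmetric about the line $e^{\textbf{i}\pi/q}\Real=\mathcal{L}_+$; and since $A\in\M_n(\Real)$ gives $\Lambda_j(A)=\overline{\Lambda_j(A)}$ (symmetry about $\Real$), composing the two reflections — or simply noting $\overline{\mathcal{L}_+}=\mathcal{L}_-$ — yields symmetry about $\mathcal{L}_-$ as well. The main obstacle is the middle step: one must be careful that the dichotomy of Proposition \ref{pr4} genuinely applies to the specific index $j$ named in the corollary (so the parameter $q$ there coincides with the $q$ in the corollary's statement), and that ``$\Lambda_j(A)$ is not a circular disc'' is exactly the negation needed to discard the circle alternative — i.e., that $\mathcal{F}_j(A)=\mathcal{S}(0,r_j(A))$ really does force $\Lambda_j(A)$ to be a disc, which uses convexity of $\Lambda_j(A)$ together with $r_j(A)$ being the maximal modulus.
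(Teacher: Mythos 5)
Your proposal follows the paper's own route essentially step for step: invoke Proposition \ref{pr4}, use ``not a circular disc'' to discard the alternative $\mathcal{F}_{j}(A)=\mathcal{S}(0,r_{j}(A))$ (your remark that this exclusion really rests on convexity of $\Lambda_{j}(A)$ is a worthwhile point the paper leaves tacit), use $r_{j}(A)\notin\Lambda_{j}(A)$ to force $\theta_{j}=\frac{\pi}{q}$, then rotate by $e^{\mp\textbf{i}\pi/q}$, argue symmetry of the rotated set about $\Real$, and rotate back. The structure and conclusion are correct.

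The one place where your justification goes astray is the claim $\Lambda_{j}(B)=\overline{\Lambda_{j}(B)}$ for $B=e^{-\textbf{i}\pi/q}A$. You derive it from ``$A$ is diagonally similar to a nonnegative matrix, whose higher rank numerical ranges are symmetric about the real axis'' --- but that only gives $\Lambda_{j}(A)=\overline{\Lambda_{j}(A)}$, which is symmetry of $\Lambda_{j}(A)$ about $\Real$, not of the rotated set $\Lambda_{j}(B)=e^{-\textbf{i}\pi/q}\Lambda_{j}(A)$; as stated the step is a non sequitur (a set symmetric about $\Real$ generally loses that symmetry after rotation). The missing ingredient is the rotational invariance $e^{\textbf{i}\frac{2\pi}{q}}\Lambda_{j}(A)=\Lambda_{j}(A)$, which follows from the defining property of $q$ (unitary diagonal similarity of $A$ to $e^{\textbf{i}\frac{2\pi}{q}}A$ preserves $\Lambda_{j}$ and rescales by the scalar). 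Combining this with $\overline{\Lambda_{j}(A)}=\Lambda_{j}(A)$ gives $\overline{\Lambda_{j}(B)}=e^{\textbf{i}\pi/q}\Lambda_{j}(A)=e^{\textbf{i}\frac{2\pi}{q}}\cdot e^{-\textbf{i}\pi/q}\Lambda_{j}(A)=\Lambda_{j}(B)$, which is the needed symmetry. (To be fair, the paper itself dismisses this step with ``clearly''; your final observation that symmetry about $\mathcal{L}_{-}$ then follows by composing with the reflection about $\Real$ is correct.)
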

\begin{proof}
By Proposition \ref{pr4},
$\mathcal{F}_{j}(A)=\set{r_{j}(A)e^{\textbf{i}\frac{\pi(2t+1)}{q}}: t=0, \ldots, q-1},$
whereupon
\[
\mathcal{F}_{j}(e^{\pm\textbf{i}\frac{\pi}{q}}A)=\set{r_{j}(A)e^{\textbf{i}\frac{2\pi t}{q}}: t=0, \ldots, q-1}\subseteq\partial\Lambda_{j}(e^{\pm\textbf{i}\frac{\pi}{q}}A),
\]
for some $j=2, \ldots, k$. Clearly, $\Lambda_{j}(e^{\pm\textbf{i}\frac{\pi}{q}}A)$ is symmetric with respect to $\Real$, implying $\Lambda_{j}(A)=e^{\mp\textbf{i}\frac{\pi}{q}}\Lambda_{j}(e^{\pm\textbf{i}\frac{\pi}{q}}A)$ to be symmetric with respect to the lines $\mathcal{L}_{\pm}$.
\end{proof}

\begin{corollary}\label{cor4}
Let $A\in\M_{n}(\Real)$, $A\geq0$ with $H(A)$ be irreducible such that $r_{k}(A)>0$. If $F(A)$ is a circular disc, then $\Lambda_{j}(A)$ is also a circular disc for every $j=2, \ldots, k$.
\end{corollary}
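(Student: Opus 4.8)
The plan is to shift $A$ so that the disc is centred at the origin, show that every $\Lambda_{j}(A)$ is then rotationally invariant about that centre, and close with the convexity of the higher rank numerical range. Note first the two identities that are immediate from \eqref{def2}: $\Lambda_{j}(cI_{n}+B)=c+\Lambda_{j}(B)$ and $\Lambda_{j}(U^{*}BU)=\Lambda_{j}(B)$ for $U$ unitary, as well as $\Lambda_{j}(e^{\textbf{i}\phi}B)=e^{\textbf{i}\phi}\Lambda_{j}(B)$. Since $A\in\M_{n}(\Real)$, $F(A)$ is symmetric about the real axis, so if $F(A)$ is a circular disc its centre $c$ is a real number; write $F(A)=\set{z\in\Complex:\abs{z-c}\le\rho}$ and put $B=A-cI_{n}\in\M_{n}(\Real)$. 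Then $F(B)=F(A)-c=\set{z\in\Complex:\abs z\le\rho}$ is a disc centred at the origin, and $H(B)=H(A)-cI_{n}$ has exactly the same off-diagonal entries as $H(A)$, hence is again irreducible; by the first identity it is enough to prove that each $\Lambda_{j}(B)$, $j=2,\dots,k$, is a circular disc centred at the origin.

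Because $F(B)$ is the full disc of radius $\rho$, we have $r(B)=\rho$ and $r(B)e^{\textbf{i}\phi}\in F(B)$ for \emph{every} $\phi\in\Real$. As $H(B)$ is irreducible, the result of Li--Tam--Wu used in the proof of Lemma \ref{lem2} (proposition 3.7 in \cite{Li-Tam-Wu}) furnishes, for each $\phi$, a unitary diagonal matrix $D_{\phi}$ with $e^{-\textbf{i}\phi}B=D_{\phi}^{-1}BD_{\phi}$. Applying $\Lambda_{j}$ and the identities above, $e^{-\textbf{i}\phi}\Lambda_{j}(B)=\Lambda_{j}(e^{-\textbf{i}\phi}B)=\Lambda_{j}(D_{\phi}^{-1}BD_{\phi})=\Lambda_{j}(B)$ for every $\phi$ and every $j=2,\dots,k$; that is, each $\Lambda_{j}(B)$ is invariant under all rotations about the origin.

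Now $\Lambda_{j}(B)$ is nonempty (it contains $\Lambda_{k}(B)$, and $r_{k}(A)>0$ keeps $\Lambda_{k}$ from being empty), compact, and convex. A nonempty compact convex subset of $\Complex$ invariant under every rotation about the origin equals the closed disc $\set{z\in\Complex:\abs z\le R_{j}}$ with $R_{j}=r_{j}(B)$: choosing $z_{0}\in\Lambda_{j}(B)$ with $\abs{z_{0}}=R_{j}$, rotational invariance places the whole circle $\abs z=R_{j}$ inside $\Lambda_{j}(B)$, convexity then supplies its convex hull, the closed disc of radius $R_{j}$, while the reverse inclusion holds because $\abs z\le r_{j}(B)=R_{j}$ throughout $\Lambda_{j}(B)$. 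Undoing the shift, $\Lambda_{j}(A)=c+\Lambda_{j}(B)=\set{z\in\Complex:\abs{z-c}\le R_{j}}$ is a circular disc for every $j=2,\dots,k$.

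The one point requiring care is the use of the diagonal--unitary--similarity theorem in the second paragraph: it is applied to $B=A-cI_{n}$, which has irreducible hermitian part but is \emph{not} entrywise nonnegative, so one must verify that the cited statement --- and the classical fact underlying it, namely that for any $M$ with $H(M)$ irreducible a point $r(M)e^{\textbf{i}\phi}$ of maximal modulus in $F(M)$ forces $M$ and $e^{-\textbf{i}\phi}M$ to be unitarily diagonally similar --- does not depend on nonnegativity. The remaining ingredients (the behaviour of $\Lambda_{j}$ under translation, unitary similarity and scalar rotation, and the deduction of a disc from rotational invariance together with convexity) are entirely routine.
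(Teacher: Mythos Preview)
Your core argument---diagonal unitary similarity forces rotational invariance of every $\Lambda_{j}$, and a nonempty compact convex rotationally invariant set is a disc---is exactly what the paper has in mind: Corollary~\ref{cor4} is stated without proof because it is read off from Proposition~\ref{pr4}, where the ``circular'' alternative gives $\mathcal{F}_{j}(A)=\mathcal{S}(0,r_{j}(A))$ for every $j$, and convexity of $\Lambda_{j}(A)$ then yields $\Lambda_{j}(A)=\mathcal{D}(0,r_{j}(A))$. You have simply made that convexity step explicit.

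The one substantive difference is your preliminary translation by the centre $c$. In the paper's setting this step is unnecessary: the hypothesis ``$F(A)$ is a circular disc'' is meant (and used) in the sense that puts you in the first branch of Proposition~\ref{pr4}, i.e.\ $\mathcal{F}(A)=\mathcal{S}(0,r(A))$, so the disc is already centred at the origin and $B=A$. More importantly, the concern you flag is a real one and not merely a technicality: Proposition~3.7 of Li--Tam--Wu is a Wielandt-type equality statement whose proof uses the Perron structure of the entrywise nonnegative matrix, and it is \emph{not} available for an arbitrary real $B$ with $H(B)$ irreducible. So your argument, as written, is complete only when $c=0$; in that case it coincides with the paper's. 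If you want the statement for discs with $c\neq 0$ (which the paper does not actually address), a separate argument is needed---for instance, first invoking a structural characterisation of nonnegative $A$ with $H(A)$ irreducible and circular $F(A)$, rather than applying the Li--Tam--Wu lemma to the shifted matrix.
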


Any matrix of the form \eqref{eq1} with $C_{q1}=0$ and $q>1$ is called to be a \emph{ block-shift matrix}. It is already known that for real nonnegative matrices with irreducible hermitian part, which can be put into the block-shift form by means of a permutation,  we obtain the circularity of the numerical range \cite[Th.1]{Tam-Yang}. Because of Corollary \ref{cor4}, it is immediate that this type of matrices also cha\-racterize the circularity of the higher rank numerical range.

\begin{proposition}
Let $A\in\M_{n}(\Real)$, $A\geq0$ with $H(A)$ be irreducible. If $A$ is permutation similar to a block-shift matrix, then $\Lambda_{j}(A)$ is identified with the circular disc $\mathcal{D}(0,r_{j}(A))$ for $j=1, \ldots, k$.
\end{proposition}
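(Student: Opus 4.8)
The plan is to show that passing from a general $q$-cyclic matrix to a block-shift one — that is, deleting the corner block $C_{q1}$ — upgrades the discrete rotational symmetry of Lemma \ref{lem1} to a \emph{continuous} one, and then to read off both the shape and the centre of $\Lambda_{j}(A)$ from Proposition \ref{pr4} together with Corollary \ref{cor4}.

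Concretely, I would fix a permutation matrix $P$ with $A=P^{T}CP$, where $C$ is as in \eqref{eq1} with $C_{q1}=0$ and $q>1$, and reuse the diagonal unitary $D_{\theta}=I_{n_{1}}\oplus e^{\textbf{i}\theta}I_{n_{2}}\oplus\cdots\oplus e^{\textbf{i}(q-1)\theta}I_{n_{q}}$ from the proof of Lemma \ref{lem1}, where $n_{1}+\cdots+n_{q}=n$ are the sizes of the diagonal zero blocks. Since $(D_{\theta}^{-1}CD_{\theta})_{lm}=e^{\textbf{i}(m-l)\theta}C_{lm}$ and the only nonzero blocks of $C$ are $C_{l,l+1}$, $l=1,\ldots,q-1$ — the corner block being absent — one gets $D_{\theta}^{-1}CD_{\theta}=e^{\textbf{i}\theta}C$ for \emph{every} $\theta\in\Real$, not merely for $\theta\in\frac{2\pi}{q}\mathbb{Z}$: there is no wrap-around block forcing $e^{\textbf{i}q\theta}=1$. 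Conjugating back through $P$ produces a unitary $U_{\theta}=P^{T}D_{\theta}P$ with $U_{\theta}^{-1}AU_{\theta}=e^{\textbf{i}\theta}A$, and hence, exactly as in Lemma \ref{lem1}, $\Lambda_{j}(A)=\Lambda_{j}(e^{\textbf{i}\theta}A)=e^{\textbf{i}\theta}\Lambda_{j}(A)$ for all $\theta\in\Real$ and all $j=1,\ldots,k$. Thus every $\Lambda_{j}(A)$ is invariant under all rotations about the origin.

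For $j=1$ this already forces $F(A)$ to be the closed disc $\mathcal{D}(0,r(A))$: it is convex and compact, and a convex compact set invariant under every rotation about $0$ is such a disc, with $r(A)>0$ because the irreducibility of $H(A)$ prevents $A=0$ — this is \cite[Th.1]{Tam-Yang}. For $j\geq2$ I would feed the fact that $F(A)$ is a circular disc into Proposition \ref{pr4}: its second alternative would make $\mathcal{F}(A)$ consist of finitely many points, contradicting $\mathcal{F}(A)=\mathcal{S}(0,r(A))$, so we are in the first alternative and $\mathcal{F}_{j}(A)=\mathcal{S}(0,r_{j}(A))$ for every $j$. By Corollary \ref{cor4} each $\Lambda_{j}(A)$ is a circular disc $\mathcal{D}(c_{j},\rho_{j})$; but a disc with $c_{j}\neq0$ has a single point of maximal modulus, whereas here that set is the whole circle $\mathcal{S}(0,r_{j}(A))$, so $c_{j}=0$ and then $\rho_{j}=r_{j}(A)$. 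This gives $\Lambda_{j}(A)=\mathcal{D}(0,r_{j}(A))$ for all $j=1,\ldots,k$. (Alternatively, once the continuous symmetry of the second paragraph is in hand one may bypass Corollary \ref{cor4} and simply invoke convexity of the higher rank numerical range: a convex, compact, rotation-invariant set is automatically $\mathcal{D}(0,r_{j}(A))$.)

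The only genuinely new ingredient — the rest being a citation or bookkeeping against results already in this section — is the observation in the second paragraph that the defining feature of a block-shift matrix, the missing block $C_{q1}$, is exactly what turns the finite symmetry group $\{e^{2\pi t\textbf{i}/q}:t=0,\ldots,q-1\}$ of Lemma \ref{lem1} into the full circle group acting on $\Lambda_{j}(A)$. One small point to watch is nondegeneracy: since Proposition \ref{pr4} and Corollary \ref{cor4} are stated with $r_{k}(A)>0$, one should either carry that assumption (as is tacit throughout the section) or verify it independently, so that the discs obtained really are $\mathcal{D}(0,r_{j}(A))$ and not degenerate points.
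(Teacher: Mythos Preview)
Your proposal is correct and your primary route is exactly the paper's: the paper gives no explicit proof, relying on the sentence preceding the proposition --- \cite[Th.~1]{Tam-Yang} gives circularity of $F(A)$, and then Corollary~\ref{cor4} (hence Proposition~\ref{pr4}) transfers this to each $\Lambda_{j}(A)$. You simply unpack what the paper leaves implicit, in particular the continuous-$\theta$ similarity $D_{\theta}^{-1}CD_{\theta}=e^{\mathbf{i}\theta}C$ enabled by the missing corner block, and the identification of the centre as $0$ via $\mathcal{F}_{j}(A)=\mathcal{S}(0,r_{j}(A))$.

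Your parenthetical alternative, however, is a genuinely different and cleaner argument than the paper's: once you have $e^{\mathbf{i}\theta}\Lambda_{j}(A)=\Lambda_{j}(A)$ for all $\theta$, invoking the convexity of $\Lambda_{j}(A)$ \cite{Hugo,Li-Sze} immediately yields $\Lambda_{j}(A)=\mathcal{D}(0,r_{j}(A))$, bypassing Proposition~\ref{pr4}, Corollary~\ref{cor4}, and the Tam--Yang citation entirely. This route is more self-contained and makes transparent that the block-shift hypothesis alone --- without any appeal to the Perron--Frobenius machinery for $F(A)$ --- forces the disc shape at every rank $j$; the price is that one must import convexity of the higher rank numerical range, which the paper's route does not explicitly need. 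Your caveat about the standing assumption $r_{k}(A)>0$ is well placed and matches the paper's tacit convention throughout the section.
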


We should note that the conclusion of the preceding proposition does not hold in general for any complex matrix $A$ being unitarily similar to a block-shift matrix. For the general case, we present the next result.
\begin{proposition}
Let $A\in\M_{n}(\Complex)$ be unitarily similar to  $\begin{bmatrix}0 & A_{1} \\ 0 & 0\end{bmatrix}$ with $A_{1}\in\M_{m,n-m}(\Complex)$ and $rankA_{1}=k$. Then
\[
\Lambda_{j}(A)=\mathcal{D}(0,\frac{\sigma_{j}(A_{1})}{2})\,\,for \,\,j=1, \ldots, k,
\]
where $\sigma_{j}(A_{1})$ denotes the $j$-th largest singular value of $A_{1}$ and then $r_{j}(A)=\frac{\sigma_{j}(A_{1})}{2}$, for $j=1, \ldots, k$.
\end{proposition}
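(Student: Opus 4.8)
The plan is to reduce everything to a block-diagonal computation. Since $A$ is unitarily similar to $N=\begin{bmatrix}0 & A_{1}\\ 0 & 0\end{bmatrix}$ and the higher rank numerical range is a unitary invariant (immediate from the characterization \eqref{def2}), it suffices to prove $\Lambda_{j}(N)=\mathcal{D}(0,\sigma_{j}(A_{1})/2)$ for $j=1,\dots,k=\operatorname{rank}A_{1}$. First I would bring $A_{1}$ to its singular value form: writing $A_{1}=U\Sigma V^{*}$ with $U,V$ unitary and $\Sigma$ the $m\times(n-m)$ matrix carrying $\sigma_{1}(A_{1})\ge\dots\ge\sigma_{k}(A_{1})>0$ on its diagonal, the block-unitary $U\oplus V$ conjugates $N$ into $\begin{bmatrix}0 & \Sigma\\ 0 & 0\end{bmatrix}$. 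After discarding the zero rows/columns coming from the non-maximal-rank part (these only contribute a further $0$ block, which can only shrink nothing since $0$ already lies in the disc), the problem decouples into $k$ independent $2\times 2$ blocks $\begin{bmatrix}0 & \sigma_{i}\\ 0 & 0\end{bmatrix}$ plus a zero block, i.e. $N$ is unitarily similar to $\bigoplus_{i=1}^{k}\begin{bmatrix}0 & \sigma_{i}(A_{1})\\ 0 & 0\end{bmatrix}\,\oplus\,0$.

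Next I would invoke the known description of the higher rank numerical range of a direct sum together with the elementary fact that $F\!\left(\begin{bmatrix}0&\sigma\\0&0\end{bmatrix}\right)=\mathcal{D}(0,\sigma/2)$. The containment $\mathcal{D}(0,\sigma_{j}(A_{1})/2)\subseteq\Lambda_{j}(N)$ is the easy direction: for any $\lambda$ with $|\lambda|\le\sigma_{j}(A_{1})/2$ one exhibits, in each of the first $j$ summands $\begin{bmatrix}0&\sigma_{i}\\0&0\end{bmatrix}$ (all of whose numerical ranges contain $\mathcal{D}(0,\sigma_{j}(A_{1})/2)$ since $\sigma_{i}\ge\sigma_{j}$ for $i\le j$), a unit vector $x_{i}$ with $x_{i}^{*}\begin{bmatrix}0&\sigma_{i}\\0&0\end{bmatrix}x_{i}=\lambda$; assembling the $x_{i}$'s into the columns of an isometry $X\in\M_{n,j}$ supported on these $j$ blocks gives $X^{*}NX=\lambda I_{j}$, so $\lambda\in\Lambda_{j}(N)$.

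The reverse inclusion $\Lambda_{j}(N)\subseteq\mathcal{D}(0,\sigma_{j}(A_{1})/2)$ is the main obstacle and where I would lean on the cited structure theory (Corollary 2.8 in \cite{Gau}, or more directly the dilation-theoretic characterization of $\Lambda_{k}$ of a direct sum of $2\times 2$ nilpotents). The point is this: if $\lambda\in\Lambda_{j}(N)$ then $\lambda I_{j}$ compresses $N$ to a $j$-dimensional space, and by a counting/interlacing argument at least one of the $2\times 2$ summands used must have $\sigma_{i}(A_{1})\ge\sigma_{j}(A_{1})$ forced to be $\ge 2|\lambda|$ — concretely, one shows that $\Lambda_{j}\!\left(\bigoplus_{i}\begin{bmatrix}0&\sigma_{i}\\0&0\end{bmatrix}\right)=\bigcap$ over all choices of $j-1$ summands to delete of the numerical range of the remaining part, which equals $\mathcal{D}(0,\sigma_{j}(A_{1})/2)$ because after deleting the $j-1$ largest one is left with a block whose norm is $\sigma_{j}(A_{1})$ and whose numerical range is the disc of radius $\sigma_{j}(A_{1})/2$. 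Finally, $r_{j}(A)=\sigma_{j}(A_{1})/2$ is read off as the maximum modulus over the disc just computed, and transporting back through the unitary similarity gives the stated formula for $\Lambda_{j}(A)$ and $r_{j}(A)$ for all $j=1,\dots,k$.
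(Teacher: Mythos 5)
Your SVD reduction and the easy inclusion are fine: after conjugating by $U\oplus V$ the matrix becomes $\bigoplus_{i}\begin{bmatrix}0&\sigma_{i}(A_{1})\\0&0\end{bmatrix}\oplus 0$, each summand with $i\leq j$ has numerical range the closed disc of radius $\sigma_{i}(A_{1})/2\supseteq\mathcal{D}(0,\sigma_{j}(A_{1})/2)$, and stacking the $j$ unit vectors block\-wise into an isometry $X$ gives $X^{*}NX=\lambda I_{j}$, so $\mathcal{D}(0,\sigma_{j}(A_{1})/2)\subseteq\Lambda_{j}(N)$. The gap is in the reverse inclusion, which you rest entirely on the identity $\Lambda_{j}\bigl(\bigoplus_{i}B_{i}\bigr)=\bigcap_{|S|=j-1}F\bigl(\bigoplus_{i\notin S}B_{i}\bigr)$. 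That identity is false in general: for $B_{1}=\mathrm{diag}(10,9)$ and $B_{2}=(1)$ one has $\Lambda_{2}(B_{1}\oplus B_{2})=\set{9}$, whereas deleting one summand gives $F(B_{1})\cap F(B_{2})=[9,10]\cap\set{1}=\emptyset$; the issue is that a single summand may contribute more than one of the $j$ largest eigenvalues of the Hermitian part. Moreover Corollary~2.8 of \cite{Gau} concerns normal matrices whose eigenvalues are roots of unity and does not apply to these nilpotent summands, so neither citation carries the step.

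The identity you need does hold for your particular summands, but only because $H\bigl(e^{\textbf{i}\theta}\begin{bmatrix}0&\sigma_{i}\\0&0\end{bmatrix}\bigr)$ has exactly one positive eigenvalue, namely $\sigma_{i}/2$ --- and that is precisely the computation you have not carried out. Once you do carry it out, the direct-sum decomposition becomes unnecessary: the paper instead applies the Li--Sze half-plane characterization $\Lambda_{j}(N)=\bigcap_{\theta\in[0,2\pi)}e^{-\textbf{i}\theta}\set{z\in\Complex:\RE z\leq\lambda_{j}(H(e^{\textbf{i}\theta}N))}$ directly to the block form, observes that $2H(e^{\textbf{i}\theta}N)=\begin{bmatrix}0&e^{\textbf{i}\theta}A_{1}\\e^{-\textbf{i}\theta}A_{1}^{*}&0\end{bmatrix}$ has eigenvalues $\pm\sigma_{i}(A_{1})$ together with zeros, hence $\lambda_{j}(H(e^{\textbf{i}\theta}N))=\sigma_{j}(A_{1})/2$ for all $\theta$ and $j\leq k$, and concludes that the intersection of the rotated half-planes is exactly $\mathcal{D}(0,\sigma_{j}(A_{1})/2)$; this gives both inclusions at once. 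To repair your proof, replace the deletion formula by this eigenvalue computation (applied either to $N$ directly or to your direct sum); the rest of your argument, including reading off $r_{j}(A)=\sigma_{j}(A_{1})/2$, then stands.
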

\begin{proof}
It only suffices to prove that $\Lambda_{j}(\begin{bmatrix}0 & A_{1}\\ 0 & 0\end{bmatrix})=\mathcal{D}(0,\frac{\sigma_{j}(A_{1})}{2})$, for $j=1, \ldots, k$, since $\Lambda_{j}(A)$ is  invariant under unitary equivalence. Therefore, by \cite{Li-Sze}
\[
\Lambda_{j}(\begin{bmatrix}0 & A_{1}\\ 0 & 0\end{bmatrix})=\bigcap_{\theta\in[0, 2\pi)}{e^{-\textbf{i}\theta}\set{z\in\Complex : \RE z \leq \lambda_{j}(H(\begin{bmatrix}0 & e^{\textbf{i}\theta}A_{1}\\ 0 & 0\end{bmatrix}))}},
\]
where  $\lambda_{j}(H(A))$ denotes the $j$-th largest eigenvalue of the  hermitian part $H(A)$ of matrix $A$. Hence
\[
\Lambda_{j}(\begin{bmatrix}0 & A_{1}\\ 0 & 0\end{bmatrix})=\bigcap_{\theta\in[0, 2\pi)}{e^{-\textbf{i}\theta}\set{z\in\Complex : \RE z \leq \frac{1}{2}\lambda_{j}(\begin{bmatrix}0 & e^{\textbf{i}\theta}A_{1}\\ e^{-\textbf{i}\theta}A_{1}^{*} & 0\end{bmatrix})}}.
\]
It is known that the eigenvalues of the hermitian matrix $\begin{bmatrix}0 & e^{\textbf{i}\theta}A_{1}\\ e^{-\textbf{i}\theta}A_{1}^{*} & 0\end{bmatrix}$ are the singular values $\sigma_{1}(A_{1})\geq\ldots\geq\sigma_{k}(A_{1})>0\geq\ldots\geq0>-\sigma_{k}(A_{1})\geq\ldots\geq
-\sigma_{1}(A_{1})$ with $k=rankA_{1}$ \cite{H.J.}, thus for $j=1, \ldots, k$
\[
\Lambda_{j}(\begin{bmatrix}0 & A_{1}\\ 0 & 0\end{bmatrix})=\bigcap_{\theta\in[0, 2\pi)}{e^{-\textbf{i}\theta}\set{z\in\Complex : \RE z \leq \frac{\sigma_{j}(A_{1})}{2}}}=\mathcal{D}(0,\frac{\sigma_{j}(A_{1})}{2}).
\]
\end{proof}

\section{Application to matrix polynomials}
An extension of the aforementioned results to the  higher rank numerical range of a matrix polynomial arises naturally and it is the purpose of this section. For this reason, we refer to \emph{Perron polynomials} $L(\lambda)$, which are $n\times n$ monic matrix polynomials of $m$th degree
\begin{equation}\label{per}
L(\lambda)=I\lambda^{m}-A_{m-1}\lambda^{m-1}-\ldots-A_{1}\lambda-A_{0},
\end{equation}
with $A_{j}$ to be nonnegative matrices, for $j=0, \ldots, m-1$. The higher rank numerical range $\Lambda_{k}(L(\lambda))$  of $L(\lambda)$
has been recently defined  in \cite{Aretaki} by the set
\begin{equation}\label{re11}
\Lambda_{k}(L(\lambda))=\set{\lambda\in\Complex: Q^{*}L(\lambda)Q=0_{k}\,\, \mathrm{for}\, \mathrm{some}\, Q\in\M_{n,k},\,Q^{*}Q=I_{k}},
\end{equation}
which for $k=1$ yields the numerical range
\begin{equation}\label{rel2}
w(L(\lambda))=\set{\lambda\in\Complex : x^{*}L(\lambda)x=0\,\, \mathrm{for}\,\, \mathrm{some}\,\, x\in\Complex^{n}, x^{*}x=1}.
\end{equation}
The notion of the Perron polynomial $L(\lambda)$ in \eqref{per} is equivalent to the nonnegativity of its $mn\times mn$ companion matrix
\[
C_{L}=\begin{bmatrix}
                          0 & I_{n} & 0 & \cdots & 0 \\
                          0 & 0 & I_{n} & \cdots & 0 \\
                          \vdots & & \ddots & \ddots & \vdots \\
                          0 & &  & & I_{n}\\
                          A_{0} & & \cdots & & A_{m-1} \\
                          \end{bmatrix},
\]
hence an extension of the main Perron-Frobenius theorem concerning the spectrum $\sigma(L)$ of $L(\lambda)$ and Issos' results on the numerical range $w(L(\lambda))$ are established via the companion matrix $C_{L}$ \cite{Ps-Ts}. Further, we denote the rank $k$-numerical radius of $\Lambda_{k}(L(\lambda))$
\[
r_{k}(L)=\max\set{\abs\lambda: \lambda\in\Lambda_{k}(L(\lambda))},
\]
with $r_{k}(L)=-\infty$ when $\Lambda_{k}(L(\lambda))=\emptyset$ and consequently, the set of maximal elements in $\Lambda_{k}(L(\lambda))$:
\[
\mathcal{F}_{k}(L)=\set{\lambda\in\Lambda_{k}(L(\lambda)): \abs\lambda=r_{k}(L)}.
\]
As we have noticed in section 2, clearly  $\Lambda_{k}(L(\lambda))$ does not always contain the element
$r_{k}(L)>0$.

The following lemma is a  generalization of a corresponding result in \cite{Ps-Ts}, which identifies $\Lambda_{k}(L(\lambda))$ with a specific
subset of $\Lambda_{k}(C_{L})$ of the companion matrix $C_{L}$ of a matrix polynomial $L(\lambda)$. We should recall that $\Lambda_{k}(C_{L})$ always
contains $\Lambda_{k}(L(\lambda))$ as it has been proved in \cite[Prop.16]{Aretaki}.

\begin{lemma}\label{lem3}
Let the $mn\times k$ matrix $Y(\lambda,Q)=(1,\lambda,\ldots,\lambda^{m-1})^{T}\otimes Q$ with $\lambda\in\Complex$ and $Q\in\M_{n,k}$.
Then
\[
\Lambda_{k}(L(\lambda))\cup\set{0} = \set{\lambda: Y^{*}(\lambda,Q)C_{L}Y(\lambda,Q)=\lambda I_{k},\,\, Q\in\M_{n,k}\,,\,Q^{*}Q=\frac{1}{c(\abs{\lambda}^{2})}I_{k}},
\]
where $c(\abs{\lambda}^{2})=(1+\abs{\lambda}^{2}+\abs{\lambda}^{4}+\ldots+\abs{\lambda}^{2(m-1)})^{1/2}$.
\end{lemma}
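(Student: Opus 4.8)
The plan is to deduce the statement from the classical linearization identity relating a monic matrix polynomial to its companion matrix, and then to read off $\Lambda_{k}(L(\lambda))$ by a short Kronecker-product computation. Write $v=v(\lambda)=(1,\lambda,\ldots,\lambda^{m-1})^{T}$, so that $Y(\lambda,Q)=v\otimes Q$ and $v^{*}v=1+|\lambda|^{2}+\cdots+|\lambda|^{2(m-1)}=c(|\lambda|^{2})^{2}$. The first step is to establish, directly from the block structure of $C_{L}$ together with $L(\lambda)=I\lambda^{m}-A_{m-1}\lambda^{m-1}-\cdots-A_{0}$, the identity
\[
C_{L}\,(v\otimes Q)=\lambda\,(v\otimes Q)-e_{m}\otimes\bigl(L(\lambda)Q\bigr),
\]
where $e_{m}=(0,\ldots,0,1)^{T}\in\Complex^{m}$: the superdiagonal identity blocks of $C_{L}$ send the first $m-1$ blocks $Q,\lambda Q,\ldots,\lambda^{m-2}Q$ of $v\otimes Q$ to $\lambda Q,\ldots,\lambda^{m-1}Q$, while the bottom block row yields $(A_{0}+A_{1}\lambda+\cdots+A_{m-1}\lambda^{m-1})Q$, which differs from the last block $\lambda^{m}Q$ of $\lambda(v\otimes Q)$ by exactly $-L(\lambda)Q$.

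Next I would left-multiply this identity by $Y(\lambda,Q)^{*}=v^{*}\otimes Q^{*}$ and apply the mixed-product rule for Kronecker products; since $v^{*}e_{m}=\overline{\lambda}^{\,m-1}$ this gives
\[
Y(\lambda,Q)^{*}C_{L}Y(\lambda,Q)=\lambda\,(v^{*}v)(Q^{*}Q)-\overline{\lambda}^{\,m-1}\,Q^{*}L(\lambda)Q .
\]
The normalization imposed on $Q$ in the statement is tailored precisely so that the factor $v^{*}v$ is cancelled and the first summand collapses to $\lambda I_{k}$ (equivalently, so that the columns of $Y(\lambda,Q)$ become orthonormal). Hence the equation $Y(\lambda,Q)^{*}C_{L}Y(\lambda,Q)=\lambda I_{k}$ is equivalent to $\overline{\lambda}^{\,m-1}Q^{*}L(\lambda)Q=0$.

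Then I would split on whether $\lambda=0$. If $\lambda\neq0$ then $\overline{\lambda}^{\,m-1}\neq0$, so the last condition reduces to $Q^{*}L(\lambda)Q=0$; since this relation is homogeneous in $Q$, renormalizing $Q$ so as to have orthonormal columns produces a witness for $\lambda\in\Lambda_{k}(L(\lambda))$ in the sense of \eqref{re11}, and conversely any such witness rescales to a solution of the companion equation obeying the prescribed normalization of $Q$. Thus for $\lambda\neq0$ membership in the right-hand set is equivalent to $\lambda\in\Lambda_{k}(L(\lambda))$. If $\lambda=0$ (and $m\ge 2$) then $v(0)=e_{1}$, the normalization forces $Q^{*}Q=I_{k}$, and $\overline{\lambda}^{\,m-1}=0$, so $Y(0,Q)^{*}C_{L}Y(0,Q)=0=\lambda I_{k}$ holds automatically for every such $Q$; hence $0$ always lies in the right-hand set, which is exactly what the term $\cup\{0\}$ records. (For $m=1$ the companion matrix is $A_{0}$ itself and the identity is immediate.) Putting the two cases together yields the asserted equality; one checks in passing that the right-hand set sits inside $\Lambda_{k}(C_{L})$, in accordance with the remark preceding the lemma.

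I do not expect a deep obstacle: the substance is the linearization identity plus careful Kronecker bookkeeping. The points that genuinely need care are matching the normalization constant so that the $\lambda$-dependent scaling of $Q$ cancels $v^{*}v$ exactly, and isolating the case $\lambda=0$, where the identity degenerates — $e_{m}^{*}v(0)=0$ kills the $L(\lambda)Q$ term for every $Q$ — and the extra point $\{0\}$ enters the statement. Note finally that nonnegativity of the coefficients $A_{j}$ is not used anywhere here; this is a purely algebraic fact about companion linearizations, the Perron hypotheses becoming relevant only when the lemma is combined with the results of Section 2.
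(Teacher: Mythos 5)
Your proof is correct and follows essentially the same route as the paper's: the linearization identity expressing $Y^{*}(\lambda,Q)(C_{L}-\lambda I_{mn})Y(\lambda,Q)$ as $\bar{\lambda}^{m-1}Q^{*}L(\lambda)Q$ up to sign, together with $Y^{*}Y=(v^{*}v)\,Q^{*}Q$, after which the normalization reduces the companion equation to $Q^{*}L(\lambda)Q=0_{k}$; you are in fact more careful than the paper about the degenerate case $\lambda=0$, which is where the extra point $\{0\}$ comes from and which the paper's proof does not address. One caveat: with $c(|\lambda|^{2})=(v^{*}v)^{1/2}$ as literally defined in the statement, the condition $Q^{*}Q=\frac{1}{c(|\lambda|^{2})}I_{k}$ gives $Y^{*}Y=c(|\lambda|^{2})I_{k}\neq I_{k}$, so the factor $v^{*}v$ is not cancelled as you claim; this is a typo in the exponent of $c$ (the paper's own proof uses $Y^{*}Y=c(|\lambda|^{2})Q^{*}Q$, i.e.\ treats $c$ as $v^{*}v$), and your reading --- normalize $Q$ so that $Y$ has orthonormal columns --- is the intended one.
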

\begin{proof}
It is readily verified that
\[
Y^{*}(\lambda,Q)(C_{L}-\lambda I_{mn})Y(\lambda,Q) =\bar{\lambda}^{m-1}Q^{*}L(\lambda)Q
\]
and $Y^{*}(\lambda,Q)Y(\lambda,Q)=c(\abs{\lambda}^{2})Q^{*}Q$. Hence,
$\lambda_{0}\in\Lambda_{k}(L(\lambda))$ if and only if \\
$\sqrt{c(\abs{\lambda_{0}}^{2})}Q^{*}L(\lambda_{0})Q\sqrt{c(\abs{\lambda_{0}}^{2})}=0_{k}$
and equivalently when $Q^{*}L(\lambda_{0})Q=0_{k}$.
\end{proof}
\begin{proposition}
Let $L(\lambda)$ be a Perron polynomial as in \eqref{per} such that $r_{k}(L)>0$, with irreducible companion matrix $C_{L}$ and $q>1$ eigenvalues  of maximum modulus. Then the maximal elements in $\Lambda_{j}(L(\lambda))$ are exactly the set
\begin{equation}
\mathcal{F}_{j}(L)=\set{r_{j}(L)e^{\textbf{i}(\theta_{j}+\frac{2\pi t}{q})}, \,t=0, \ldots, q-1},
\end{equation}
where $\theta_{j}=0$ or $\theta_{j}=\frac{\pi}{q}$, for $j=1, \ldots, k$.
\end{proposition}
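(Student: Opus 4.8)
The plan is to transfer the result from Proposition~\ref{pr4} (or Proposition~\ref{pr3}) applied to the companion matrix $C_{L}$ down to the polynomial $L(\lambda)$, using Lemma~\ref{lem3} as the dictionary between the two. First I would invoke the hypothesis that $C_{L}$ is nonnegative, irreducible, and has $q>1$ eigenvalues of maximal modulus, so that $C_{L}$ is imprimitive with index of imprimitivity $q$; Proposition~\ref{pr3} then gives $\mathcal{F}_{j}(C_{L})=\{r_{j}(C_{L})e^{\mathbf{i}(\vartheta_{j}+2\pi t/q)}:t=0,\ldots,q-1\}$ with $\vartheta_{j}=0$ or $\pi/q$, for every $j=1,\ldots,k$. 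The key observation is that, by \cite[Prop.16]{Aretaki}, $\Lambda_{k}(L(\lambda))\subseteq\Lambda_{k}(C_{L})$, and moreover the main result of \cite{Ps-Ts} (together with Lemma~\ref{lem3} here) shows that the maximal-modulus part of $\Lambda_{j}(C_{L})$ is actually attained inside $\Lambda_{j}(L(\lambda))$: this is exactly where the Perron structure enters, since a maximal element of $\Lambda_{j}(C_{L})$ corresponds to a matrix $Q$ and a scalar of modulus $r_{j}(C_{L})$ for which $Y(\lambda,Q)$ realizes the compression, and the Perron--Frobenius structure of $C_{L}$ forces the associated $k$-dimensional subspace to have the block form $(1,\lambda,\ldots,\lambda^{m-1})^{T}\otimes Q$ demanded by Lemma~\ref{lem3}.

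Concretely, the steps I would carry out are: (1) record that $r_{j}(L)=r_{j}(C_{L})$ for $j=1,\ldots,k$ — the inequality $r_{j}(L)\le r_{j}(C_{L})$ is immediate from $\Lambda_{j}(L)\subseteq\Lambda_{j}(C_{L})$, and the reverse follows because the (unique up to the $q$-th roots of unity rotation) maximal element of $\Lambda_{j}(C_{L})$ guaranteed by Proposition~\ref{pr3} can be shown to lie in $\Lambda_{j}(L(\lambda))$; (2) show that if $\mu\in\mathcal{F}_{j}(C_{L})$ then $\mu\in\Lambda_{j}(L(\lambda))$, using Lemma~\ref{lem3}: given $X^{*}C_{L}X=\mu I_{k}$ with $X^{*}X=I_{k}$ attaining $|\mu|=r_{j}(C_{L})$, the Perron--Frobenius / maximality argument (as in \cite{Ps-Ts} for $k=1$, extended to compressions) pins down the eigen/ compression structure so that $X$ has the form $\frac{1}{\sqrt{c(|\mu|^{2})}}Y(\mu,Q)$ for some $Q$ with $Q^{*}Q=\frac{1}{c(|\mu|^{2})}I_{k}$ after rescaling, whence $\mu\in\Lambda_{j}(L(\lambda))\cup\{0\}$, and $\mu\ne0$ since $r_{j}(L)>0$; (3) conclude $\mathcal{F}_{j}(L)=\mathcal{F}_{j}(C_{L})$, which by Proposition~\ref{pr3} is precisely the claimed set with $\theta_{j}=\vartheta_{j}\in\{0,\pi/q\}$. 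The determination $\theta_{j}\in\{0,\pi/q\}$ itself is inherited verbatim from Proposition~\ref{pr3}, using that $C_{L}$, and hence $\Lambda_{j}(C_{L})=\Lambda_{j}(L)\cup(\text{lower-modulus part})$, is symmetric about the real axis because all $A_{i}\ge0$ are real, so the set of $q$ equally spaced maximal points is invariant under conjugation.

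The main obstacle I anticipate is step~(2): one must verify that every maximal compression of $C_{L}$ really does have the tensor form $Y(\lambda,Q)$, i.e.\ that the $k$-dimensional subspace realizing a point of $\mathcal{F}_{j}(C_{L})$ is "compatible" with the companion block structure in the way Lemma~\ref{lem3} requires. For $k=1$ this is the content of the Perron eigenvector being of the shape $(1,\lambda,\ldots,\lambda^{m-1})^{T}\otimes x$, which follows from the companion form of the eigen-equation $C_{L}v=\lambda v$; for general $k$ one needs the analogous statement at the level of a compression achieving maximal modulus, and this should follow by combining the scalar argument of \cite{Ps-Ts} with the characterization \eqref{def2} of $\Lambda_{k}$ and the fact (from the imprimitive structure) that the maximal-modulus eigenvalues of $C_{L}$ are simple and their eigenvectors have the required block-companion shape. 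I would spell this reduction out carefully; once it is in place, the rest is a direct quotation of Proposition~\ref{pr3} applied to $C_{L}$.
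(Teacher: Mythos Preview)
Your plan hinges on step~(2): showing that every isometry $X$ that realizes a maximal element of $\Lambda_{j}(C_{L})$ has the tensor form $Y(\lambda,Q)$. You flag this yourself as the main obstacle, and it is a real one: for $k\ge 2$ the columns of $X$ are not eigenvectors of $C_{L}$, so the companion--eigenvector identity $C_{L}v=\lambda v \Rightarrow v=(1,\lambda,\ldots,\lambda^{m-1})^{T}\otimes x$ gives no direct grip on them. Nothing in \cite{Ps-Ts} or in the present paper supplies this, and in particular the equalities $r_{j}(L)=r_{j}(C_{L})$ and $\mathcal{F}_{j}(L)=\mathcal{F}_{j}(C_{L})$ that you want in steps~(1) and~(3) are never asserted here. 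So as written your proposal has a genuine gap at exactly the point where the argument must connect $C_{L}$ back to $L(\lambda)$.

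The paper avoids this difficulty altogether by a more direct route. It never compares $\mathcal{F}_{j}(L)$ with $\mathcal{F}_{j}(C_{L})$; instead it proves the $\tfrac{2\pi}{q}$-rotational invariance of $\Lambda_{j}(L(\lambda))$ itself. The point is that the unitary diagonal matrix implementing $e^{\mathbf{i}2\pi t/q}C_{L}=D^{-1}C_{L}D$ can be taken in the block form
\[
D=\mathrm{diag}\bigl(D_{0},\,e^{\mathbf{i}2\pi t/q}D_{0},\,\ldots,\,e^{\mathbf{i}2\pi t(m-1)/q}D_{0}\bigr),\qquad D_{0}\in\M_{n}\ \text{unitary diagonal},
\]
and this form is compatible with the tensor structure: $DY(\lambda,Q)=Y\bigl(e^{\mathbf{i}2\pi t/q}\lambda,\,D_{0}Q\bigr)$. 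Plugging this into the identity $Y^{*}(\lambda,Q)(C_{L}-\lambda I)Y(\lambda,Q)=\bar{\lambda}^{m-1}Q^{*}L(\lambda)Q$ from Lemma~\ref{lem3} shows immediately that $\mu\in\Lambda_{j}(L(\lambda))$ iff $e^{\mathbf{i}2\pi t/q}\mu\in\Lambda_{j}(L(\lambda))$. From there the inclusion $\mathcal{F}_{j}(L)\supseteq\{r_{j}(L)e^{\mathbf{i}(\theta_{j}+2\pi t/q)}\}$ and the determination $\theta_{j}\in\{0,\pi/q\}$ follow exactly as in Proposition~\ref{pr3}, using only the symmetry of $\Lambda_{j}(L(\lambda))$ about the real axis. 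No comparison with arbitrary compressions of $C_{L}$ is needed.
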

\begin{proof}
The companion matrix $C_{L}\geq0$ is an imprimitive matrix with index of imprimitivity $q>1$, since $\sigma(L)=\sigma(C_{L})$. Hence $q$ is the largest positive integer such that $C_{L}$ is unitarily diagonal to $e^{\textbf{i}\frac{2\pi}{q}}C_{L}$. Therefore,  $e^{\textbf{i}\frac{2\pi t}{q}}C_{L}=D^{-1}C_{L}D$ $(t=0, 1, \ldots, q-1)$, for some unitary diagonal $mn\times mn$ matrix $D=diag(D_{0},e^{\textbf{i}\frac{2\pi t}{q}}D_{0}, \ldots,$ $e^{\textbf{i}\frac{2\pi t(m-1)}{q}}D_{0})$, with $D_{0}\in\M_{n}$ to be unitary diagonal. Suppose $\theta_{j}\in[0,2\pi)$ is the principal argument such that $0<r_{j}(L)\in\Lambda_{j}(e^{-\textbf{i}\theta_{j}}L(\lambda))$, then by Lemma \ref{lem3} we have
\begin{eqnarray*}
0_{k} & = & Y^{*}(r_{j}(L)e^{\textbf{i}\theta_{j}},Q)(C_{L}-r_{j}(L)e^{\textbf{i}\theta_{j}} I_{mn})Y(r_{j}(L)e^{\textbf{i}\theta_{j}},Q)\\
& = & Y^{*}(r_{j}(L)e^{\textbf{i}\theta_{j}},Q)D^{-1}(e^{-\textbf{i}\frac{2\pi t}{q}}C_{L}-r_{j}(L)e^{\textbf{i}\theta_{j}}I_{mn})
DY(r_{j}(L)e^{\textbf{i}\theta_{j}},Q)\\
& = & Y^{*}(r_{j}(L)e^{\textbf{i}(\theta_{j}+\frac{2\pi t}{q})},D_{0}Q)(C_{L}-r_{j}(L)e^{\textbf{i}(\theta_{j}+\frac{2\pi t}{q})} I_{mn})Y(r_{j}(L)e^{\textbf{i}(\theta_{j}+\frac{2\pi t}{q})},D_{0}Q),
\end{eqnarray*}
with $Y^{*}(e^{\textbf{i}\frac{2\pi t}{q}}\lambda,D_{0}Q)Y(e^{\textbf{i}\frac{2\pi t}{q}}\lambda,D_{0}Q)=Y^{*}(\lambda,Q)Y(\lambda,Q)=I_{mn,k}$
for any scalar $\lambda$. Apparently,
\begin{equation}
\mathcal{F}_{j}(L)\supseteq\set{r_{j}(L)e^{\textbf{i}(\theta_{j}+\frac{2\pi t}{q})}, \,t=0, \ldots, q-1}.
\end{equation}
Further, as in the proof of Proposition \ref{pr3}, we establish the equality of the sets with $\theta_{j}=0$ or $\theta_{j}=\frac{\pi}{q}$ for $j=1, \ldots, k$, since the coefficients  of $L(\lambda)$ are real, whereupon $\Lambda_{k}(L(\lambda))$ is symmetric with respect to the real axis.
\end{proof}

\bibliographystyle{amsplain}

\end{document}